\documentclass[12pt]{amsart}
\usepackage{amsmath}
\usepackage{amsthm}
\usepackage{amssymb}
\usepackage{amscd}
\usepackage{amsfonts}
\usepackage{amsbsy}
\usepackage{epsfig,afterpage}
\usepackage[dvips]{psfrag}
\usepackage{subfigure}
\usepackage{epsfig}
\usepackage{amsmath}
\usepackage{srcltx}
\usepackage{amsfonts}
\usepackage{amssymb}
\usepackage{psfrag}
\usepackage{verbatim}
\usepackage{graphicx}
\usepackage{amsmath}
\usepackage{amsthm}
\usepackage{amssymb}
\usepackage{amscd}
\usepackage{amsfonts}
\usepackage{amsbsy}
\usepackage{epsfig}
\usepackage[dvips]{psfrag}
\usepackage{multirow}
\usepackage{graphics}
\usepackage{epstopdf}
\usepackage{overpic}
\usepackage{float}

\usepackage{graphicx}

\usepackage{blindtext}
\usepackage{multicol}

\usepackage[colorlinks=true, linkcolor=blue, citecolor=red, urlcolor=blue]{hyperref}

\newtheorem{theorem}{Theorem}
\newtheorem{proposition}[theorem]{Proposition}

\newtheorem{lemma}[theorem]{Lemma}
\newtheorem{example}[theorem]{Example}

\newtheorem{definition}[theorem]{Definition}

\newtheorem{mtheorem}{Theorem}

\usepackage{float}
\usepackage{tikz, wrapfig}
\tikzset{node distance=3cm, auto}
\allowdisplaybreaks

\begin{document}

\title[Equivalence of planar vector fields near the infinity]
{Topological equivalence in the infinity of a planar vector field and its principal part defined through Newton polytope}

\author[T. M. Dalbelo, R. Oliveira and O. H. Perez]
{Thais Maria Dalbelo$^{1}$, Regilene Oliveira$^{2}$, \\ Otavio Henrique Perez$^{2}$}

\address{$^{1}$Federal University of S\~{a}o Carlos (UFSCar). Rodovia Washington Luís, Km 235, Zip Code 13565-905, S\~{a}o Carlos, S\~{a}o Paulo, Brazil.}

\address{$^{2}$University of S\~{a}o Paulo (USP), Institute of Mathematics and Computer Science. Avenida Trabalhador S\~{a}o Carlense, 400, Zip Code 13566-590, S\~{a}o Carlos, S\~{a}o Paulo, Brazil.}

\email{thaisdalbelo@ufscar.br}
\email{regilene@icmc.usp.br}
\email{otavio.perez@icmc.usp.br}

\thanks{ .}

\subjclass[2020]{34A26, 34C08}

\keywords{Poincaré compactification, Poincaré--Lyapunov compactification, Topological equivalence, Newton Polygon}
\date{}
\dedicatory{}
\maketitle

\begin{abstract}
Given a planar polynomial vector field $X$ with a fixed Newton polytope $\mathcal{P}$, we prove (under some non degeneracy conditions) that the monomials associated to the upper boundary of $\mathcal{P}$ determine (under topological equivalence) the phase portrait of $X$ in a neighbourhood of boundary of the Poincaré--Lyapunov disk. This result can be seen as a version of the well known result of Berezovskaya, Brunella and Miari \cite{Berezovskaya,BrunellaMiari} for the dynamics at the infinity, We also discuss the effect of the Poincaré--Lyapunov compactification on the Newton polytope.

\end{abstract}

\section{Introduction and statement of the problem}

There is an extensive literature concerning the use of Newton polytope in the study of planar vector fields. For instance, Berezovskaya, Brunella and Miari \cite{Berezovskaya,BrunellaMiari} gave conditions that assure the existence of a topological equivalence between an analytic planar vector field and its \emph{principal part} near an isolated singularity (see \cite{AlonsoGonzalez} for an analogous result in dimension three). In \cite{Zupanovic2} Županović studied topological equivalence of vector fields and their \emph{generalized principal parts}. The Newton polytope was also used in the topological determination of singularities of positive quadratic differential forms (or pairs of foliations) \cite{GutierrezOliveiraTeixeira} and constrained differential systems \cite{PerezSilva2}. From a practical point of view, these results say how to identify the monomials of the vector fields that determine the topological behavior near the singularity.

With respect to the dynamics near the infinity, in the book \cite{Bruno} Bruno studied asymptotic behavior of integral curves of vector fields near singular points and near the infinity. In \cite{Cano} and the references therein was investigated conditions on the Newton polytope that guarantees the existence of one-parameter family of fractional power series solutions. Finally, we refer to \cite{DeminaGineValls} and the references therein on problems concerning Newton polytope and integrability of differential equations.

Although the use of Newton polytope in the study of  planar vector fields is well understood in the literature, for the best of our knowledge, there is no explicit result concerning topological determination of the dynamics in a neighbourhood of the boundary of the \emph{Poincaré--Lyapunov disk}. In other words, as far as we know, there is no explicit result that guarantees the topological determination of the dynamics of planar polynomial vector fields in a neighbourhood of the \emph{whole} infinity. This result can be interpreted as a version of the local topological determination given by Berezovskaya, Brunella and Miari at the infinity. We believe that such result will be useful in the classification of global phase portraits of planar polynomial vector fields.  Compactification of affine algebraic varieties was investigated by Khovanskii \cite{Khovanskii1,Khovanskii2}.

The contribution of this paper is twofold. Firstly, non degeneracy conditions are given in order to assure topological equivalence between a polynomial vector field $X$ and its \emph{upper principal part} in a neighbourhood of the boundary of the Poincaré--Lyapunov disk (see Definitions \ref{def-upper-degenerated} and \ref{def-eq-infinity} below). We follow the ideas of \cite{BrunellaMiari}, but our approach is strongly inspired in toroidal compactification \cite{Khovanskii1,Khovanskii2} instead. The influence of these non degeneracy conditions in the Poincaré--Lyapunov compactification process is also investigated (see Subsection \ref{subsec-influence-non-degeneracy}). The non-existence of periodic orbits near the infinity via Newton polytope is studied in Theorem \ref{main-theorem-particular-case}. In a second moment, we study the effect of the Poincaré--Lyapunov compactification in the Newton polytope. The effect of the \emph{Poincaré compactification} on the Newton polytope was studied in \cite{Berezovskaya2,Kappos}.

The paper is structured as follows. In Section \ref{sec-poincare-pl} we present the basic concepts of the paper, namely: Poincaré compactification, Poincaré--Lyapunov compactification and Newton polytope. We also present some examples of all of these objects. Strongly inspired in Toroidal Compactification, in Section \ref{sec-compact-adapted} we introduce the notion of compactification adapted to the Newton polytope, which will be crucial for the proof of our main result (Theorem \ref{main-theorem}) in Section \ref{sec-main-theo}. The Theorem \ref{main-theorem-particular-case}, which establishes  conditions on the Newton polytope for the non existence of periodic orbits near the infinity, is proved in subsection \ref{subsec-periodic-orbits}. Finally, in Section \ref{sec-plc-newton-polygon} it is studied the effect of the Poincaré--Lyapunov compactification in the Newton polytope.

\section{Poincaré compactification, Poincaré--Lyapunov compactification and Newton polytopes}\label{sec-poincare-pl}

Poincaré and Poincaré--Lyapunov compactifications are also called as homogeneous and quasi-homogeneous compactifications, respectively.

\subsection{Poincaré compactification}

The Poincaré compactification is a well-known technique utilized in the study on global dynamics of (polynomial) vector fields. For an introduction of the method, see \cite[Chapter 5]{DumortierLlibreArtes} and \cite[Chapter 3]{Perko}. 


  \label{fig-compact-construc}



Let $X$ be a planar polynomial vector field. The dynamics at infinity can be studied by using the change of coordinates \begin{multicols}{2} \noindent\begin{equation}\label{eq-poincare-x-positive}
x = v^{-1}, \ y = uv^{-1},
\end{equation}
\begin{equation}\label{eq-poincare-x-negative}
x = -v^{-1}, \ y = uv^{-1},
\end{equation}
\begin{equation}\label{eq-poincare-y-positive}
x = uv^{-1}, \ y = v^{-1},
\end{equation}
\begin{equation}\label{eq-poincare-y-negative}
x = uv^{-1}, \ y = -v^{-1}.
\end{equation}\end{multicols}

Throughout this paper, $(u,v)$ concerns variables near the infinity, whereas $(x,y)$ concerns the original system of coordinates. Observe that, in each equation above, the variables $(u,v)$ have different meanings, because each equation represents different coordinate systems. Moreover, after such a transformations, it is necessary to multiply the vector field by $v^{\delta-1}$, where $\delta = \operatorname{deg}(X)$ in order to extend the vector field to the infinity. The domain of the extended vector field after the change of coordinates \eqref{eq-poincare-x-positive}, \eqref{eq-poincare-x-negative}, \eqref{eq-poincare-y-positive} and \eqref{eq-poincare-y-negative} will be respectively denoted by $U_{1}, V_{1}, U_{2}$ and $V_{2}$.

Just as in \cite{Panazzolo}, in what follows $X$ is written in the so called logarithmic basis. More precisely, we write $X$ as a sum of homogeneous polynomial vector fields of the form
$$X(x,y) = \displaystyle\sum_{d=-1}^{\delta -1}\sum_{m+n=d} x^{m}y^{n}\Big{(}a_{m,n}x\displaystyle\frac{\partial}{\partial x} + b_{m,n}y\displaystyle\frac{\partial}{\partial y}\Big{)},$$
with $a_{m,n},b_{m,n}\in\mathbb{R}$ and $m,n\in \mathbb{Z}$ satisfying: \textbf{(1)} For $m < -1$ or $n \leq -1$, $a_{m,n} = 0$; \textbf{(2) }For $m \leq -1$ or $n < -1$, $b_{m,n} = 0$.

In the open sets $U_{1}$ and $U_{2}$, we obtain, respectively, the vector fields
\begin{equation}\label{eq-poincare-x-positive-vf}
\overline{X}_{x}^{+}(u,v) = \displaystyle\sum_{d=-1}^{\delta -1}\sum_{m+n=d} v^{\delta-(d+1)}u^{n}\Big{(}(b_{m,n}-a_{m,n})u\displaystyle\frac{\partial}{\partial u} - a_{m,n}v\displaystyle\frac{\partial}{\partial v}\Big{)};
\end{equation}
\begin{equation}\label{eq-poincare-y-positive-vf}
\overline{X}_{y}^{+}(u,v) = \displaystyle\sum_{d=-1}^{\delta -1}\sum_{m+n=d} v^{\delta-(d+1)}u^{m}\Big{(}(a_{m,n}-b_{m,n})u\displaystyle\frac{\partial}{\partial u} - b_{m,n}v\displaystyle\frac{\partial}{\partial v}\Big{)}.
\end{equation}

The vector fields \eqref{eq-poincare-x-positive-vf} and \eqref{eq-poincare-y-positive-vf} are called, respectively, the Poincaré compactification of $X$ in the \emph{$x$-positive direction} and \emph{$y$-positive direction}. With analogous computations it is possible to obtain $\overline{X}^{-}_{x}$ and $\overline{X}^{-}_{y}$, which are, respectivelly, the Poincaré compactification in the \emph{$x$-negative direction} and \emph{$y$-negative direction}. The equations \eqref{eq-poincare-x-positive-vf}, and \eqref{eq-poincare-y-positive-vf} represent, respectively, the expression of $X$ in the open sets $U_{1}$ and $U_{2}$. In each case, the set $\{v = 0\}$ represents the infinity.

The transformations \eqref{eq-poincare-x-positive}, \eqref{eq-poincare-x-negative}, \eqref{eq-poincare-y-positive} and \eqref{eq-poincare-y-negative} can be combined into a single global transformation
\begin{equation}\label{eq-poincare-global}
x = \displaystyle\frac{\cos\theta}{r}, \ y = \displaystyle\frac{\sin\theta}{r}, \ \quad \ \theta\in\mathbb{S}^{1}, r\geq 0.
\end{equation}

After such transformation, it is necessary to multiply the vector field by $r^{\delta-1}$. The set $\{r = 0\}$ represents the infinity. The phase portrait of the compactified vector field is visualized in $\mathbb{D}^{1}$, also called \emph{Poincaré disk}. The interior of $\mathbb{D}^{1}$ presents the phase portrait of the finite part, and its boundary $\mathbb{S}^{1}\subset \mathbb{D}^{1}$ plays the role of the infinity.

The construction of the vector field in the Poincaré disk in terms of the equations \eqref{eq-poincare-x-positive}, \eqref{eq-poincare-x-negative}, \eqref{eq-poincare-y-positive}, \eqref{eq-poincare-y-negative} and \eqref{eq-poincare-global} leads us to generalize such technique to the so called \emph{Poincaré--Lyapunov compactification}.

\subsection{Poincaré--Lyapunov compactification}

In the study of singularities at infinity, it is usual to deal with very degenerated equilibrium points. Thereby, sometimes is more preferable to use a generalization of the Poincaré Compactification, called \emph{Poincaré--Lyapunov Compactification} (PLC for short). The construction is very similar to the equations \eqref{eq-poincare-x-positive}, \eqref{eq-poincare-x-negative}, \eqref{eq-poincare-y-positive}, \eqref{eq-poincare-y-negative} and \eqref{eq-poincare-global}. For an introduction on such method for the planar case, we refer to \cite[Chapters 5 and 9]{DumortierLlibreArtes}.

The PLC technique was utilized  in the study of Liénard Equations (see for instance \cite{CollDumortierProhens, Dumortier, DumortierHerssens, DumortierLi, DumortierRousseau}). In \cite{LiangHuangZhao} was given all possible phase portraits in the \emph{Poincaré--Lyapunov disk} of polynomial vector fields with degree of quasihomogeneity $4$ having isolated singularities. The dynamics of the Benoît system (which is three dimensional) in the Poincaré--Lyapunov sphere was considered in \cite{LimaLlibre}. 

Given a \emph{weight vector} $\omega = (\alpha,\beta)$ of positive integers, denote by $\operatorname{Cs}\theta$ and $\operatorname{Sn}\theta$ the unique solutions of the Cauchy problem
\begin{equation}\label{eq-cauchy-problem}
\displaystyle\frac{d}{d\theta}\operatorname{Cs}\theta = -\operatorname{Sn}^{2\alpha - 1}\theta, \  \ \displaystyle\frac{d}{d\theta}\operatorname{Sn}\theta = \operatorname{Cs}^{2\beta - 1}\theta, \ \quad \ \operatorname{Cs}0 = 1, \operatorname{Sn}0 = 0;
\end{equation}
which are analytic and periodic, whose period $T$ is given by
\begin{equation}\label{eq-period-lyapunov}
T = \displaystyle\frac{2\alpha^{\frac{1-2\alpha}{2\alpha}}}{\beta^{\frac{1}{2\alpha}}} \int_{0}^{1}(1-t)^{\frac{1-2\alpha}{2\alpha}}t^{\frac{1-2\beta}{2\beta}}dt.
\end{equation}

Such pair of functions also satisfy the equation $$\beta\operatorname{Sn}^{2\alpha}\theta + \alpha\operatorname{Cs}^{2\beta}\theta = \alpha.$$

Consider the Equation \eqref{eq-cauchy-problem}. Given a planar vector field $X$, the analytic vector field $\overline{X}$ obtained after the transformations
\begin{equation}\label{eq-cs-sn}
x = \displaystyle\frac{\operatorname{Cs}\theta}{r^{\alpha}}, \ \quad \ y = \displaystyle\frac{\operatorname{Sn}\theta}{r^{\beta}}, \ \quad \ \theta\in\mathbb{S}^{1}, r\geq 0;
\end{equation}
and by an suitable multiplication of a power of $r$ is called \emph{Poincaré--Lyapunov compactification of $X$} (PLC for short) and it is defined in the \emph{Poincaré--Lyapunov disk $\mathbb{D}_{(\alpha,\beta)}$} (PL-disk for short).
 
Just as in the homogeneous compactification, it is often preferable to deal with directional charts. In the charts $U_{1}, V_{1}, U_{2}$ and $V_{2}$, the dynamics at infinity can be studied by using respectively the formulas \begin{multicols}{2} \noindent\begin{equation}\label{eq-plc-x-positive}
x = v^{-\alpha}, \ y = uv^{-\beta},
\end{equation}
\begin{equation}\label{eq-plc-x-negative}
x = -v^{-\alpha}, \ y = uv^{-\beta},
\end{equation}
\begin{equation}\label{eq-plc-y-positive}
x = uv^{-\alpha}, \ y = v^{-\beta},
\end{equation}
\begin{equation}\label{eq-plc-y-negative}
x = uv^{-\alpha}, \ y = -v^{-\beta}.
\end{equation}\end{multicols}

Observe that in each equation above the variables $(u,v)$ have different meanings. Moreover, after such transformations, it is necessary to multiply the vector field by a suitable power of $v$.

The next step is to find the expressions of the compactified vector field $\overline{X}$ on each chart. Firstly, we write the vector field $X$ as a sum of quasi-homogeneous polynomial vector fields in the logarithmic basis
\begin{equation}\label{eq-quasi-homogeneous-vf}
X(x,y) = \displaystyle\sum_{d=-1}^{\delta -1}\sum_{\alpha m+\beta n=d} x^{m}y^{n}\Big{(}a_{m,n}x\displaystyle\frac{\partial}{\partial x} + b_{m,n}y\displaystyle\frac{\partial}{\partial y}\Big{)}.
\end{equation}

We say that equation \eqref{eq-quasi-homogeneous-vf} is a \emph{$(\alpha,\beta)$-decomposition of the planar vector field $X$}. Moveover, the vector field
$$X^{(\alpha,\beta)}_{d}(x,y) = \sum_{\alpha m+\beta n=d} x^{m}y^{n}\Big{(}a_{m,n}x\displaystyle\frac{\partial}{\partial x} + b_{m,n}y\displaystyle\frac{\partial}{\partial y}\Big{)}$$
is called \emph{$d$-level of the $(\alpha,\beta)$-decomposition of $X$}.

In the open sets $U_{1}$ and $U_{2}$, we obtain, respectively, the vector fields
\begin{equation}\label{eq-plc-x-positive-vf}
\overline{X}_{x}^{+}(u,v) =  \displaystyle\sum_{d=-1}^{\delta -1}\sum_{\alpha m+\beta n=d} v^{\delta-(d+1)}u^{n}\Big{(}(b_{m,n}-\frac{\beta}{\alpha}a_{m,n})u\displaystyle\frac{\partial}{\partial u} - \frac{1}{\alpha}a_{m,n}v\displaystyle\frac{\partial}{\partial v}\Big{)};
\end{equation}
\begin{equation}\label{eq-plc-y-positive-vf}
\overline{X}_{y}^{+}(u,v) =  \displaystyle\sum_{d=-1}^{\delta -1}\sum_{\alpha m+\beta n=d} v^{\delta-(d+1)}u^{m}\Big{(}(a_{m,n}-\frac{\alpha}{\beta}b_{m,n})u\displaystyle\frac{\partial}{\partial u} - \frac{1}{\beta}b_{m,n}v\displaystyle\frac{\partial}{\partial v}\Big{)}.
\end{equation}

The equations \eqref{eq-plc-x-positive-vf} and \eqref{eq-plc-y-positive-vf} will be called, respectively, Poincaré--Lyapunov compactification of $X$ in the \emph{$x$-positive direction} and \emph{$y$-positive direction}. With straightforward computations it is possible to obtain $\overline{X}^{-}_{x}$ and $\overline{X}^{-}_{y}$ as well, which are, respectivelly, the Poincaré--Lyapunov compactification in the \emph{$x$-negative direction} and \emph{$y$-negative direction}. The equations \eqref{eq-plc-x-positive-vf}, and \eqref{eq-plc-y-positive-vf} represent, respectively, the expression of $\overline{X}$ in the open sets $U_{1}$ and $U_{2}$. In each case, the set $\{v = 0\}$ represents the infinity. Moreover, setting $\alpha = \beta = 1$ in equations \eqref{eq-plc-x-positive-vf}, and \eqref{eq-plc-y-positive-vf}, we obtain \eqref{eq-poincare-x-positive-vf}, and \eqref{eq-poincare-y-positive-vf}.

At this point a natural question arises: \emph{how do we choose the weight vector $\omega$?} The suitable choice of $\omega$ for the PLC process is discussed in Section \ref{sec-plc-newton-polygon}, and we utilize the well known \emph{Newton polytope}.

\subsection{Newton polytope and its lower and upper boundaries}\label{subsec-newton-polytope}

In this subsection we recall the construction of the Newton polytope for planar vector fields, where can also be found in \cite{Berezovskaya2, Kappos}. Given the vector field \eqref{eq-quasi-homogeneous-vf}, we associate the monomials $a_{m,n}x^{m}y^{n}$ and $b_{m,n}x^{m}y^{n}$ with nonzero coefficients to a point $(m,n)$ in the plane of powers. Observe that each point $(m,n)$ is contained in a line of the form $\{\alpha m + \beta n = d\}$.

The \emph{support $\mathcal{S}_{X}$ of $X$} is the set
$$\mathcal{S}_{X} = \{(m,n)\in \mathbb{Z}^{2}; a_{m,n}^{2} + b_{m,n}^{2} \neq 0\}.$$

Note that the support $\mathcal{S}_{X}$ is finite due to the fact that $X$ is polynomial. The \emph{Newton polytope $\mathcal{P}_{X}$} associated to the polynomial vector field $X$ given by \eqref{eq-quasi-homogeneous-vf} is the convex hull of the support $\mathcal{S}_{X}$. Observe that the Newton polytope strongly depends on the coordinate system adopted. Moreover, when the vector field $X$ considered is well understood, for simplicity sake we denote $\mathcal{S}_{X}$ by $\mathcal{S}$ and $\mathcal{P}_{X}$ by $\mathcal{P}$. Finally, note that if the interior of $\mathcal{P}$ is empty then $\mathcal{P}$ is just a segment.

The non-smooth points of the boundary $\partial\mathcal{P}$ of the Newton polytope will be called \emph{vertices} of $\mathcal{P}$. They are enumerated in the counter clockwise sense $p_{0},\dots,p_{k}\in\partial\mathcal{P}$ in such a way that the vertex $p_{0}$ is the first point of $\mathcal{S}$ according to the Lexicographical order. The boundary $\partial\mathcal{P}$ of the Newton polytope is the union of a finite number of compact segments, and they also will be enumerated in the counter clockwise sense $\gamma_{1}, ..., \gamma_{k+1}$ in such a way that $\gamma_{l}$ contains $p_{l-1}$ and $p_{l}$, and $\gamma_{k+1}$ contains $p_{k}$ and $p_{0}$ (see Figure \ref{fig-boundaries-polygon}). Finally, we introduce the notions of \emph{lower} and \emph{upper boundaries} of the Newton polytope $\mathcal{P}$. Denote the convex envelope of $\mathcal{S} + (\mathbb{R}_{\geq0})^{2}$ by $\mathcal{P}_{+}$ and its boundary by $\partial\mathcal{P}_{+}$.




\begin{definition}
Let $\mathcal{P}$ be a Newton polytope with non empty interior. The \emph{lower boundary of $\mathcal{P}$} is the union of the compact segments of $\partial\mathcal{P}_{+}$ and it will be denoted by $\mathcal{P}^{L}$. The \emph{upper boundary of $\mathcal{P}$} is the closure of $\partial\mathcal{P} \setminus \partial\mathcal{P}_{+}$ and it will be denoted by $\mathcal{P}^{U}$. If $\mathcal{P}$ has empty interior, then $\mathcal{P} = \mathcal{P}^{L} = \mathcal{P}^{U}$. See Figure \ref{fig-boundaries-polygon}.
\end{definition}

\begin{figure}[h]\center{
\begin{overpic}[width=0.8\textwidth]{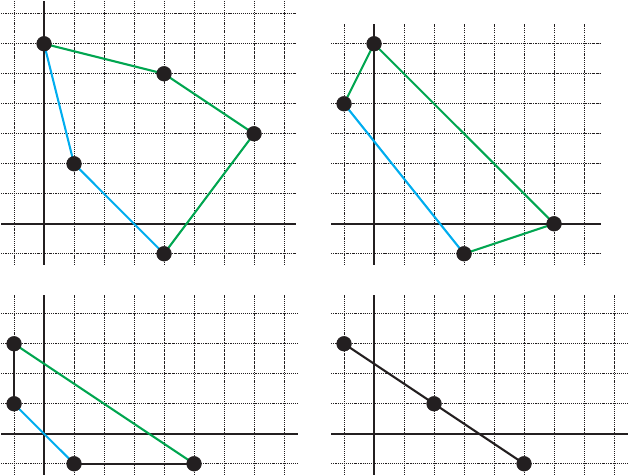}
    \put(2,70){{$p_{0}$}}
    \put(10,60){{$\gamma_{1}$}}
    \put(8,45){{$p_{1}$}}
    \put(18,45){{$\gamma_{2}$}}
    \put(20,35){{$p_{2}$}}
    \put(28,45){{$\gamma_{3}$}}
    \put(42,53){{$p_{3}$}}
    \put(30,55){{$\gamma_{4}$}}
    \put(25,66){{$p_{4}$}}
    \put(17,62){{$\gamma_{5}$}}
\end{overpic}}
\caption{\footnotesize{Examples of lower (highlighted in blue) and upper (highlighted in green) boundaries of a Newton polytope $\mathcal{P}$.}}
\label{fig-boundaries-polygon}
\end{figure}

\subsection{An example}\label{subsec-example} This subsection aims to exemplify the definitions and tools presented so far. Consider the polynomial vector field
\begin{equation*}
X(x,y) = (y^{3} - x^{3}y)\displaystyle\frac{\partial}{\partial x} + (-x^{3} + xy^{3})\displaystyle\frac{\partial}{\partial y}.   
\end{equation*}

The support is the set $\mathcal{S}_{X} = \{(-1,3),(2,1),(1,2),(3,-1)\}$ and the Newton polytope $\mathcal{P}_{X}$ is represented in the Figure \ref{fig-exe-plc}(a). Applying a Poincaré--Lyapunov compactification with weight $\omega = (1,2)$, one obtains the following vector fields in the positive $x$ and $y$ direction, respectively:
\begin{equation*}
\overline{X}^{+}_{x}(u,v) = \Big{(}u^{3} - 2u^{4} + 2u^{2}v - v^{4}\Big{)}\displaystyle\frac{\partial}{\partial u} + uv(v - u^{2}) \displaystyle\frac{\partial}{\partial v}; 
\end{equation*}
\begin{equation*}
\overline{X}^{+}_{y}(u,v) = \Big{(}1 - \frac{u^{2}}{2} - u^{3}v + \frac{u^{4} v^{4}}{2}\Big{)}\displaystyle\frac{\partial}{\partial u} + \frac{uv}{2}(u^{2}v^{4} - 1) \displaystyle\frac{\partial}{\partial v};    
\end{equation*}
whose Newton polytopes $\mathcal{P}_{\overline{X}^{+}_{x}}$ and $\mathcal{P}_{\overline{X}^{+}_{y}}$ are represented in Figure \ref{fig-exe-plc}(a). Now, consider the vector field
\begin{equation*}
Y(x,y) = \Big{(}y^{3} - x^{3}y + A(x,y)\Big{)}\displaystyle\frac{\partial}{\partial x} + \Big{(}-x^{3} + xy^{3} + B(x,y)\Big{)}\displaystyle\frac{\partial}{\partial y};   
\end{equation*}
in which $A, B$ are of the form
$$A(x,y) = a_{1}xy^{2} + a_{2}x^{3} + a_{3}x^{2}y; \quad B(x,y) = b_{1}y^{3} + b_{2}x^{2}y + b_{3}xy^{2}.$$

One more time we use the PLC technique with weight $\omega = (1,2)$, and the following vector fields are obtained in the positive $x$ and $y$ direction, respectively:
\begin{equation*}
\begin{split}
\overline{Y}^{+}_{x}(u,v) & = \Big{(}u^{3} - 2u^{4} + 2u^{2}v - v^{4} - 2uv^{6}A\Big{(}\frac{1}{v}, \frac{u}{v^{2}}\Big{)} + v^{7}B\Big{(}\frac{1}{v}, \frac{u}{v^{2}}\Big{)}\Big{)}\displaystyle\frac{\partial}{\partial u}  \\
& + \Big{(}uv^{2} - u^{3}v  - v^{7}A\Big{(}\frac{1}{v}, \frac{u}{v^{2}}\Big{)}\Big{)} \displaystyle\frac{\partial}{\partial v}; 
\end{split}
\end{equation*}
\begin{equation*}
\begin{split}
\overline{Y}^{+}_{y}(u,v) & = \Big{(}1 - \frac{u^{2}}{2} - u^{3}v + \frac{u^{4} v^{4}}{2} + v^{6}A\Big{(}\frac{u}{v}, \frac{1}{v^{2}}\Big{)} - \frac{uv^{7}}{2}B\Big{(}\frac{u}{v}, \frac{1}{v^{2}}\Big{)}\Big{)}\displaystyle\frac{\partial}{\partial u}  \\
& + \Big{(}\frac{u^{3}v^{5}}{2} - \frac{uv}{2} - \frac{v^{8}}{2}B\Big{(}\frac{u}{v}, \frac{1}{v^{2}}\Big{)}\Big{)} \displaystyle\frac{\partial}{\partial v}. 
\end{split}
\end{equation*}

Although $A$ and $B$ are homogeneous polynomials of degree three, it can be verified that $\mathcal{P}^{U}_{X} = \mathcal{P}^{U}_{Y}$, $\mathcal{P}^{L}_{\overline{X}^{+}_{x}} = \mathcal{P}^{L}_{\overline{Y}^{+}_{x}}$ and $\mathcal{P}^{L}_{\overline{X}^{+}_{y}} = \mathcal{P}^{L}_{\overline{Y}^{+}_{y}}$ (see Figure \ref{fig-exe-plc}b). This example suggests that monomials associated to the upper boundary of the polytope determine the topological behavior of the singularities at the infinity.

\begin{figure}[h]\center{
\begin{overpic}[width=0.85\textwidth]{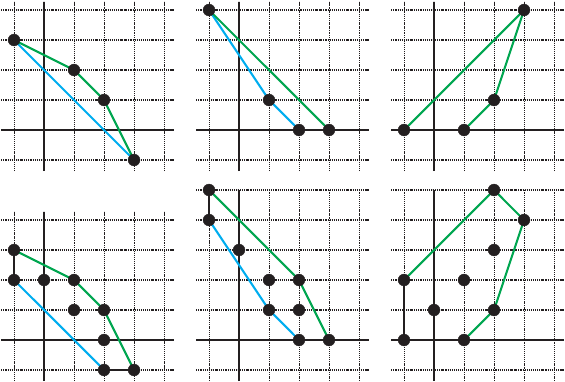}
    \put(-6,50){(a)}
    \put(-6,15){(b)}
\end{overpic}}
\caption{\footnotesize{Newton polytopes of the vector fields studied in the example of the subsection \ref{subsec-example}. Figure (a) from the left to the right: Newton polytopes $\mathcal{P}_{X}$, $\mathcal{P}_{\overline{X}^{+}_{x}}$ and $\mathcal{P}_{\overline{X}^{+}_{y}}$. Figure (b) from the left to the right: Newton polytopes $\mathcal{P}_{Y}$, $\mathcal{P}_{\overline{Y}^{+}_{x}}$ and $\mathcal{P}_{\overline{Y}^{+}_{y}}$.The lower and upper boundaries are highlighted in blue and green, respectively.}}
\label{fig-exe-plc}
\end{figure}

\section{Compactification adapted to the Newton polytope}\label{sec-compact-adapted}

Let $\mathcal{P}$ be a Newton polytope with non empty interior and let $\gamma\subset\partial\mathcal{P}$ be a segment that is contained in a line of the form $\{\mu m + \nu n = D\}$. The vector $(\mu,\nu)\in\mathbb{Z}^{2}$ is an \emph{inward normal vector of $\gamma$} (see Figure \ref{fig-inward-normal-vectors}) if it satisfies one of the following:
\begin{description}
    \item[(a)] If $\gamma\subset\mathcal{P}^{L}$, then $D$ is positive in the equation $\mu m + \nu n = D$.
    \item[(b)] If $\gamma\subset\mathcal{P}^{U}$, then $D$ is negative in the equation $\mu m + \nu n = D$.
\end{description}

\begin{figure}[h]\center{
\begin{overpic}[width=0.40\textwidth]{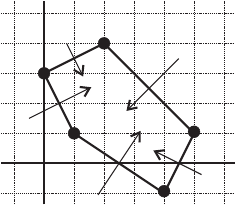}
\end{overpic}}
\caption{\footnotesize{Inward normal vectors of $\mathcal{P}$. Geometrically, when $(\mu,\nu)$ is positioned on $\gamma\subset\partial\mathcal{P}$, it points toward the interior of $\mathcal{P}$.}}
\label{fig-inward-normal-vectors}
\end{figure}

\begin{definition} The collection $\Sigma^{*}_{\mathcal{P}} = \{(\mu_{i},\nu_{i})\}_{i=1}^{r}$ of the inward normal vectors of $\mathcal{P}^{U}$ with $\gcd(\mu_{i},\nu_{i}) = 1$ is called \emph{skeleton of a simple fan}. A \emph{simple fan} is the collection of vectors $\Sigma_{\mathcal{P}} = \{\xi_{j}\}_{j = 0}^{s}$, with $\xi_{j} = (\alpha_{j},\beta_{j})\in\mathbb{Z}^{2}$, and such collection satisfies:
\begin{description}
    \item[(i)] $\Sigma^{*}_{\mathcal{P}}\subset \Sigma_{\mathcal{P}}$.
    \item[(ii)] $\xi_{0} = (0,1)$ and $\xi_{s} = (1,0)$.
    \item[(iii)] $\gcd(\alpha_{j},\beta_{j}) = 1$ and $\det\left(
  \begin{array}{cc}
    \alpha_{j-1} & \beta_{j-1} \\
    \alpha_{j} & \beta_{j} \\
  \end{array}
\right) = 1$.
    \item[(iv)] Two consecutive vectors $\xi_{j-1},\xi_{j}$ are not normal to two consecutive segments of the upper boundary of $\mathcal{P}$. 
    \item[(v)] $\Sigma_{\mathcal{P}}$ is \emph{minimal} in the sense that there is no collection satisfying conditions (i) - (iv) containing a smaller number of vectors.
\end{description}
\end{definition}

It is known from Toric Geometry \cite{Brasselet,CoxLittleSchenck} that a skeleton $\Sigma^{*}_{\mathcal{P}}$ always can be augmented in order to obtain a fan $\Sigma_{\mathcal{P}}$ satisfying conditions (i) - (iv). Condition (iii) assures that the set $B_{j} = \{\xi_{j-1},\xi_{j}\}$ of consecutive vectors of the fan $\Sigma$ will be is a basis of $\mathbb{Z}^{2}$. Observe that there is no vector $\xi_{j}\in\Sigma_{\mathcal{P}}$ that is contained in the first quadrant, for $j = 1,\dots, s-1$. For $j = 2,\dots,s-1$, each $B_{j}$ defines a map $\psi_{j}$ given by
$$
\begin{array}{cccccccc}
\psi_{j}: & (\mathbb{R}^{*})^{2} & \rightarrow &  (\mathbb{R}^{*})^{2} & ; & \psi_{j}(u,v) & = & (u^{\alpha_{j-1}}v^{\alpha_{j}}, \ u^{\beta_{j-1}}v^{\beta_{j}}); \\
\psi_{j}^{-1}: & (\mathbb{R}^{*})^{2} & \rightarrow &  (\mathbb{R}^{*})^{2} & ; & \psi_{j}^{-1}(x,y) & = & (x^{\beta_{j}}y^{-\alpha_{j}}, \ x^{-\beta_{j-1}}y^{\alpha_{j-1}}); \\
\end{array}
$$
in which $\mathbb{R}^{*} = \mathbb{R}\backslash\{0\}$. The set $B_{0} = \{\xi_{s},\xi_{0}\} = \{(1,0);(0,1)\}$ induces the identity map. For $j = 1$ and $j = s$ we obtain
$$
\begin{array}{cccccccc}
\psi_{1}: & \mathbb{R}\times\mathbb{R}^{*} & \rightarrow &  \mathbb{R}^{*} \times \mathbb{R} & ; & \psi_{1}(u,v) & = & (v^{-1}, \ uv^{\beta_{1}}); \\
\psi_{1}^{-1}: & \mathbb{R}^{*}\times\mathbb{R} & \rightarrow &  \mathbb{R}\times\mathbb{R}^{*} & ; & \psi_{1}^{-1}(x,y) & = & (x^{\beta_{1}}y, \ x^{-1});
\end{array}
$$

$$
\begin{array}{cccccccc}
\psi_{s}: & \mathbb{R}^{*}\times\mathbb{R} & \rightarrow &  \mathbb{R}\times\mathbb{R}^{*} & ; & \psi_{s}(u,v) & = & (u^{\alpha_{s-1}}v, \ u^{-1}); \\
\psi_{s}^{-1}: & \mathbb{R}\times\mathbb{R}^{*} & \rightarrow &  \mathbb{R}^{*} \times \mathbb{R} & ; & \psi_{s}^{-1}(x,y) & = & (y^{-1}, \ xy^{\alpha_{s-1}}).
\end{array}
$$

In what follows is presented the definition of compactification of the phase space adapted to the Newton polytope, which is an adaptation to our context of the definition presented in \cite[Section 1.3]{Khovanskii1}. Afterwards it will be defined compactification of a polynomial vector field adapted to the Newton polytope.

\begin{definition}
A \emph{compactification of $(\mathbb{R}^{*})^{2}$ adapted to $\mathcal{P}$} is a pair $(\mathcal{M},\Psi)$ satisfying the following conditions:
\begin{description}
    \item[(a)] $\mathcal{M}$ is a smooth and compact 2-dimensional manifold containing $(\mathbb{R}^{*})^{2}$.
    \item[(b)] $\Psi:\mathcal{M}\rightarrow\mathbb{R}^{2}$ is a map locally given by $\overline{\psi}_{j}:U_{j}\subset\mathcal{M}\rightarrow\mathbb{R}^{2}$, which is the extension of $\psi_{j}$ to a set $\mathcal{I}$ that is the union of 1-dimensional smooth manifolds in general position.
    \item[(c)] The collection of sets $U_{j}$ covers $\mathcal{M}$.
\end{description}
\end{definition}

Local coordinates in $\mathcal{M}$ and $\mathbb{R}^{2}$ are denoted by the pair $(u,v)$ and $(x,y)$, respectively. Observe that each $\psi_{j}$ is a diffeomorphism. Moreover, the sets $\{uv = 0\}\subset U_{j}$ (for $j = 2,\dots,s-1)$, $\{v = 0\}\subset U_{1}$ and $\{u = 0\}\subset U_{s}$ play the role of the infinity of $\mathbb{R}^{2}$.

It is well known from Toric Geometry (see \cite[Section 3.4]{Brasselet} and \cite[Section 3.1]{CoxLittleSchenck}) that if the manifold $\mathcal{M}$ is constructed over $\mathbb{C}$, then it will be smooth and compact (and it has complex dimension 2). In the real case the manifold $\mathcal{M}$ will also be smooth and compact. Roughly speaking, the manifold $\mathcal{M}$ contructed over $\mathbb{R}$ can be seen as a ``restriction'' of the toric variety constructed over $\mathbb{C}$ (see \cite{Sottile} for further details).

\begin{definition}\label{def-compact}
A \emph{compactification of $X$ adapted to $\mathcal{P}$} is the vector field $\overline{X}:\mathcal{M}\rightarrow T\mathcal{M}$ defined as the pullback of $X$ by $\Psi$, that is, $\overline{X} = \Psi_{*}X$. See Figure \ref{fig-diag-def-compact}.
\end{definition}

\begin{figure}[h!]
\begin{flushright}
\begin{center}
\begin{tikzpicture}
\node (A) {$T\mathcal{M}$};
\node (B) [right of=A] {$T\mathbb{R}^{2}$};
\node (C) [below of=A] {$\mathcal{M}$};
\node (D) [below of=B] {$\mathbb{R}^{2}$};
\large\draw[->] (C) to node {\mbox{{\footnotesize $\overline{X}$}}} (A);
\large\draw[->] (D) to node {\mbox{{\footnotesize $X$}}} (B);
\large\draw[->] (A) to node {\mbox{{\footnotesize $\Psi^{*}$}}} (B);
\large\draw[->] (C) to node {\mbox{{\footnotesize $\Psi$}}} (D);
\end{tikzpicture}
\end{center}
\end{flushright}
\caption{\footnotesize{Cummutative diagram of Definition \ref{def-compact}.}}
\label{fig-diag-def-compact}
\end{figure}

If one works with local charts, for each $j = 1,\dots,s$, consider the vector field $X_{j} = \psi_{j}^{*} X$, that is, $X_{j}$ is the pushforward of $X$ by $\psi_{j}$. Observe that $\psi_{j}$ is a diffeomorphism outside $\{uv = 0\}$ and $X_{j}$ can be analytically extended to such set. Denote such extension by $\overline{X}_{j}$. The domain of $\overline{X}_{j}$ is the set $U_{j}$ and $\{uv = 0\}$ plays the role of the infinity. In particular, for $\overline{X}_{1}$ the infinity is represented by $\{v = 0\}$ and for $\overline{X}_{s}$ the infinity is represented for $\{u = 0\}$.

\begin{example}
Suppose that the upper boundary $\mathcal{P}^{U}$ of a given polynomial vector field $X$ has only one segment, whose inward normal vector is $(-1,-1)$. Then the simple fan associated to $\mathcal{P}$ is the collection $\Sigma_{\mathcal{P}} = \{(0,1),(-1,-1), (0,1)\}$. Such collection induces the coordinates changes $\psi_{1}$ and $\psi_{2}$ given respectively by
$$x = \frac{1}{v}, y = \frac{u}{v}, \quad \text{and} \quad x = \frac{v}{u}, y = \frac{1}{u};$$
which is related to the classical Poincaré compactification (compare with the equations \eqref{eq-poincare-x-positive} and \eqref{eq-poincare-y-positive}). In the first case, the infinity is given by $\{v = 0\}$, whereas in the second case the infinity is the set $\{u = 0\}$. The manifold induced by $\psi_{1}$ and $\psi_{2}$ is the real projective plane $\mathbb{RP}^{2}$.
\end{example}

\begin{example}
Consider once again the vector field $X$ of the example presented in the subsection \ref{subsec-example}. The skeleton of a simple fan is the set $\Sigma^{*}_{\mathcal{P}} = \{(-2,-1); (-1,-1);(-1,-2)\}$, whereas the simple fan is 
\begin{equation*}
\begin{split}
\Sigma_{\mathcal{P}} = & \Big{\{}(0,1); (-1,0); (-2,-1); (-3,-2); (-1,-1);(-2,-3);(-1,-2); \\
& (0,-1);(1,0)\Big{\}}.
\end{split}
\end{equation*}
\end{example}

\section{Proof of the main Theorem}\label{sec-main-theo}

This section is devoted to prove Theorem \ref{main-theorem}. Firstly, it is presented the notions of \emph{upper principal part} of a planar polynomial vector field, and the so called \emph{non degenerated upper principal part}, which is a generic condition (see Proposition \ref{prop-generic}). The notion of topological equivalence in the infinity is given in Definition \ref{def-eq-infinity}. We also present a simpler proof of our main Theorem for the case in which the upper boundary $\mathcal{P}^{U}$ is just only one segment (see Theorem \ref{main-theorem-particular-case}).

\begin{definition}\label{def-upper-degenerated}
Let $X$ be a polynomial vector field. The \emph{upper principal part of $X$} is the vector field
$$X_{\Delta}^{U}(x,y) = \sum_{(m,n)\in\mathcal{P}^{U}} x^{m}y^{n}\Big{(}a_{m,n}x\displaystyle\frac{\partial}{\partial x} + b_{m,n}y\displaystyle\frac{\partial}{\partial y}\Big{)};$$
in which $\mathcal{P}^{U}$ denotes the upper boundary of $\mathcal{P}$. The upper principal part $X_{\Delta}^{U}$ is \emph{non degenerated} if any quasi homogeneous component 
$$X_{\gamma_{j}}(x,y) = \sum_{(m,n)\in\gamma_{j}} x^{m}y^{n}\Big{(}a_{m,n}x\displaystyle\frac{\partial}{\partial x} + b_{m,n}y\displaystyle\frac{\partial}{\partial y}\Big{)};$$
has no singularities in $(\mathbb{R}^{*})^{2}$, in which $\gamma_{j}\subset\mathcal{P}^{U}$. 
\end{definition}

The influence of this non degeneracy condition in the PLC process is discussed in the subsection \ref{subsec-influence-non-degeneracy}. It is important to observe that such condition is \emph{generic}. More precisely, we have the following proposition.

\begin{proposition}\label{prop-generic}
Denote the set of all polynomial vector fields defined in $\mathbb{R}^{2}$ by $\mathfrak{X}$, and the set of all polynomial vector fields with non degenerated upper principal part by $\mathfrak{U}$. Then the set $\mathfrak{U}$ is open and dense in $\mathfrak{X}$.
\end{proposition}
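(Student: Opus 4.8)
The plan is to identify $\mathfrak{X}$ with the (finite dimensional) space of coefficients $\{a_{m,n},b_{m,n}\}$ and to read off non degeneracy as finitely many polynomial conditions on these coefficients. Throughout I work relative to the fixed Newton polytope $\mathcal{P}$, so that the upper boundary decomposes as $\mathcal{P}^{U}=\gamma_{1}\cup\cdots\cup\gamma_{r}$ into finitely many segments. Since $\mathfrak{U}=\bigcap_{j}\{X:\ X_{\gamma_{j}}\text{ has no zero in }(\mathbb{R}^{*})^{2}\}$ and a finite intersection of open (respectively open and dense) sets is again open (respectively open and dense), it suffices to prove that, for a single upper segment $\gamma$, the set of $X$ for which $X_{\gamma}$ is nonvanishing on $(\mathbb{R}^{*})^{2}$ is open and dense.

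Next I would make the vanishing condition algebraic. If $\gamma$ lies on the line $\{\mu m+\nu n=D\}$ with $\gcd(\mu,\nu)=1$, then along $\gamma$ every lattice point has the form $(m_{0}+k\nu,\,n_{0}-k\mu)$, so setting $s=x^{\nu}y^{-\mu}$ one factors the two components of $X_{\gamma}$ as $x^{m_{0}+1}y^{n_{0}}\,p(s)$ and $x^{m_{0}}y^{n_{0}+1}\,q(s)$, where $p$ and $q$ are one variable polynomials whose coefficients are exactly the $a_{m,n}$ and $b_{m,n}$ with $(m,n)\in\gamma$. Because $(\nu,-\mu)$ is primitive, $s=x^{\nu}y^{-\mu}$ takes every value in $\mathbb{R}^{*}$ as $(x,y)$ ranges over $(\mathbb{R}^{*})^{2}$; hence $X_{\gamma}$ vanishes somewhere in $(\mathbb{R}^{*})^{2}$ if and only if $p$ and $q$ share a common nonzero real root. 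Non degeneracy along $\gamma$ thus becomes the condition that $p$ and $q$ have no common root in $\mathbb{R}^{*}$.

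For density I would use the resultant. The degenerate locus is contained in $\{\operatorname{Res}(p,q)=0\}$, a polynomial equation in the coefficients; it suffices to exhibit a single coefficient vector for which $p$ and $q$ are coprime (hence have no common root) to conclude that $\operatorname{Res}(p,q)$ is not identically zero. Then the degenerate locus lies in a proper algebraic subvariety of the coefficient space, which is closed with empty interior, so its complement is open and dense; intersecting over the finitely many segments of $\mathcal{P}^{U}$ yields density of $\mathfrak{U}$. For an arbitrary starting vector field one first, if necessary, perturbs to realize the full polytope $\mathcal{P}$ (switching on the vertex monomials with small coefficients) and then avoids the resultant locus.

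Openness amounts to showing that the degenerate locus $D_{\gamma}=\{X:\ p\text{ and }q\text{ have a common root in }\mathbb{R}^{*}\}$ is closed. Given $X_{k}\to X$ with common roots $s_{k}$, the point is to prevent $s_{k}$ from escaping to $0$ or to $\infty$. Here the endpoints of $\gamma$ are \emph{vertices} of $\mathcal{P}$, so at each endpoint at least one of $a_{v},b_{v}$ is nonzero; this keeps the leading (respectively trailing) coefficient of $p$ or of $q$ bounded away from $0$ near $X$, so $\{s_{k}\}$ stays in a compact subset of $\mathbb{R}^{*}$ and a subsequence converges to a genuine common root of the limiting pair, whence $X\in D_{\gamma}$. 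Thus $D_{\gamma}$ is closed and $\mathfrak{U}$ is open. The main obstacle is precisely this interplay with the Newton polytope: the upper boundary, and hence the very list of conditions defining $\mathfrak{U}$, jumps as coefficients vanish, so openness genuinely requires that near a non degenerate $X$ the polytope $\mathcal{P}$ (its vertices and upper segments) stays constant and that common roots cannot be born at the coordinate axes. Controlling this degeneration, rather than the resultant computation, is the delicate step.
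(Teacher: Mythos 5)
The paper does not actually prove this proposition: its ``proof'' is a one-line appeal to \cite[Proposition 6]{BrunellaMiari}. Your argument is, in substance, the standard one behind that reference --- reduce to a single upper segment, substitute $s=x^{\nu}y^{-\mu}$ to convert non-degeneracy along $\gamma$ into the absence of a common root in $\mathbb{R}^{*}$ of two one-variable polynomials $p,q$, use the resultant for density, and a compactness-of-roots argument for openness --- so you have essentially supplied the proof the authors omitted. The segment factorization, the surjectivity of $(x,y)\mapsto x^{\nu}y^{-\mu}$ onto $\mathbb{R}^{*}$ for primitive $(\nu,-\mu)$, and the resultant step are all correct.

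The one genuine issue is the point you raise at the end but do not resolve: openness. As the proposition is literally stated ($\mathfrak{U}$ open in the set of \emph{all} polynomial vector fields), the claim is false, and no repair of your closedness argument for $D_{\gamma}$ can save it. For instance, $X=(x^{2}+y^{2})(x\frac{\partial}{\partial x}+y\frac{\partial}{\partial y})$ has upper boundary equal to the single segment joining $(0,2)$ and $(2,0)$ and is non-degenerate, while $Y_{\epsilon}=(x^{2}+y^{2}-\epsilon x^{2}y^{2})(x\frac{\partial}{\partial x}+y\frac{\partial}{\partial y})$ converges to $X$ as $\epsilon\to 0^{+}$ but acquires the upper segment from $(0,2)$ to $(2,2)$, along which it restricts to $y^{2}(1-\epsilon x^{2})(x\frac{\partial}{\partial x}+y\frac{\partial}{\partial y})$, vanishing on $\{x=\pm\epsilon^{-1/2}\}\subset(\mathbb{R}^{*})^{2}$. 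Monomials switched on strictly above $\mathcal{P}^{U}$ thus create new degenerate upper segments arbitrarily close to $X$. The statement is only true --- and Brunella--Miari's Proposition 6 is formulated --- on the stratum of vector fields with a \emph{fixed} Newton polytope (equivalently, with the vertex coefficient pairs kept nonzero), which is also the paper's standing hypothesis. Within that stratum your argument does close: the endpoint vertices of $\gamma$ carry coefficient pairs bounded away from zero near $X$, so the common roots $s_{k}$ remain in a compact subset of $\mathbb{R}^{*}$, a limit point is a common root of the limiting pair, and $D_{\gamma}$ is closed. You should therefore state explicitly that openness is proved relative to the fixed-polytope stratum; density in the ambient coefficient space survives as you indicate, but openness does not.
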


\begin{proof}
The proof is completely analogous to \cite[Proposition 6]{BrunellaMiari}.    
\end{proof}

It can be defined the \emph{lower principal part} and \emph{non degenerated lower principal part}. Indeed, the main theorem of \cite{Berezovskaya,BrunellaMiari} says that the topological behavior of the singularity is determined by the lower principal part of an analytic vector field. The idea is to show that, given an analytic vector field with non degenerated lower principal part, the singularities on the exceptional divisor are determined by the terms of the lower principal part. In Theorem \ref{main-theorem}, we show that the singularities at the infinity are determined by the terms of the upper principal part.

\begin{definition}
A planar vector field $Y$ is \emph{free of characteristic orbits near the infinity} if, for its compactification $\overline{Y}$, there is no singularity in the boundary of $\mathbb{D}_{(\alpha,\beta)}$ having characteristic orbit intersecting the interior of $\mathbb{D}_{(\alpha,\beta)}$.     
\end{definition}

A precise definition of characteristic orbit can be found in \cite[Definition 1.19]{Dumortier77}.

\begin{definition}\label{def-eq-infinity}
The planar polynomial vector fields $X_{1}$ and $X_{2}$ are \emph{topologically equivalent near the infinity of the Poincaré--Lyapunov disk $\mathbb{D}_{(\alpha,\beta)}$} (or simply topologically equivalent near the infinity for short) if there is a homeomorphism $H:W_{1}\rightarrow W_{2}$ satisfying:
\begin{description}
    \item[(H1)] $W_{1}$ and $W_{2}$ are open sets containing the boundary $\mathcal{D}$ of the PL disk $\mathbb{D}_{(\alpha,\beta)}$, and $H(\mathcal{D}) = \mathcal{D}$.
    \item[(H2)] Denote the PL compactification of $X_{i}$ by $\overline{X}_{i}$.  Given $t > 0$ and $p\in W_{1}$, there is $t^{*} > 0$ such that
    $$H\Big{(}\varphi_{\overline{X}_{1}}(p,[0,t])\Big{)} = \varphi_{\overline{X}_{2}}(H(p),[0,t^{*}]),$$
    in which $\varphi_{\overline{X}_{i}}$ is the flow of $\overline{X}_{i}$, for $i = 1,2$. See Figure \ref{fig-eq-infinity}.
\end{description}
\end{definition}

\begin{figure}[h]\center{
\begin{overpic}[width=0.80\textwidth]{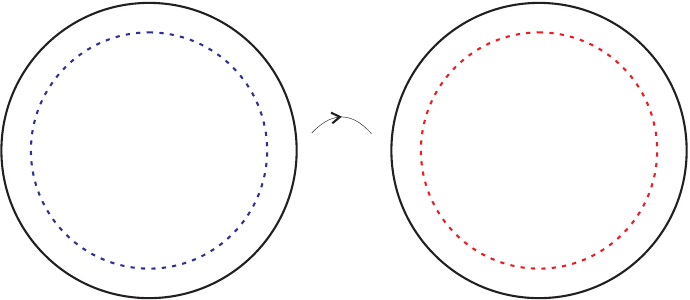}
\put(20,-4){$\mathcal{D}$}
\put(79,-4){$\mathcal{D}$}
\put(6,20){$W_{1}$}
\put(89,20){$W_{2}$}
\put(47,29){$H$}
\end{overpic}}
\caption{\footnotesize{Topological equivalence near the infinity. See Definition \ref{def-eq-infinity}.}}
\label{fig-eq-infinity}
\end{figure}

\begin{mtheorem}\label{main-theorem}
Let $X$ be a planar polynomial vector field with non degenerated upper principal part and that is not free of characteristic orbits near the infinity. Suppose also that  $X_{\Delta}^{U}$ does not have curve of singularities. Then $X$ and $X_{\Delta}^{U}$ are topologically equivalent near the infinity.
\end{mtheorem}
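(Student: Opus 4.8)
The plan is to realise both $X$ and its upper principal part $X_{\Delta}^{U}$ inside the compactification $(\mathcal{M},\Psi)$ adapted to $\mathcal{P}$ (Definition \ref{def-compact}) and to prove that the compactified fields are orbitally equivalent in a neighbourhood of the divisor $\mathcal{I}=\{uv=0\}$ representing the infinity, afterwards descending to the Poincar\'e--Lyapunov disk. The first point is that the monomial charts $\psi_{j}$ are tailored to the inward normals of the segments of $\mathcal{P}^{U}$: in the chart $\psi_{j}$ a monomial $x^{m}y^{n}$ becomes $u^{m\alpha_{j-1}+n\beta_{j-1}}v^{m\alpha_{j}+n\beta_{j}}$, so its order in the divisor coordinate $v$ is $m\alpha_{j}+n\beta_{j}$, which is minimised precisely over the monomials of $X$ supported on the face of $\mathcal{P}^{U}$ selected by the ray $\xi_{j}$. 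Consequently the terms of lowest $v$-order in $\overline{X}_{j}=\psi_{j}^{*}X$ come exactly from $\mathcal{P}^{U}$, while the monomials of $X$ lying strictly below $\mathcal{P}^{U}$ (those in which $X$ and $X_{\Delta}^{U}$ differ) produce strictly higher $v$-order. Hence $\overline{X}_{j}$ and $(\overline{X_{\Delta}^{U}})_{j}$ have the \emph{same} restriction to every component of $\mathcal{I}$ and differ only by terms vanishing on that component.

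Second, I would locate the singularities on $\mathcal{I}$. The non-degeneracy hypothesis (Definition \ref{def-upper-degenerated}) says that each quasi-homogeneous component $X_{\gamma_{j}}$ has no zero in $(\mathbb{R}^{*})^{2}$; after the monomial substitution this means exactly that $\overline{X_{\Delta}^{U}}$ has no singularity in the interior of any component of $\mathcal{I}$, and the assumption that $X_{\Delta}^{U}$ carries no curve of singularities guarantees that no component of $\mathcal{I}$ is entirely singular. Therefore all singularities on the divisor sit at the corners $\{u=v=0\}$, whose number and position are dictated by the fan $\Sigma_{\mathcal{P}}$, hence by $\mathcal{P}^{U}$ alone. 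Since consecutive fan vectors form a basis of $\mathbb{Z}^{2}$ (condition (iii)), each corner is a normal crossing and the leading part of $\overline{X}_{j}$ there is the monomial field determined by the two adjacent faces of $\mathcal{P}^{U}$, that is, by $X_{\Delta}^{U}$.

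Third, I would show that the local topological type at each corner is fixed by this leading part. When a corner is elementary (some eigenvalue nonzero) the semi-hyperbolic reduction together with Hartman--Grobman shows that the higher-$v$-order discrepancy between $\overline{X}_{j}$ and $(\overline{X_{\Delta}^{U}})_{j}$ does not alter the local phase portrait; when a corner is non-elementary I would desingularise it by the weighted blow-ups compatible with $\Sigma_{\mathcal{P}}$. Here the hypothesis that $X$ is \emph{not free of characteristic orbits near the infinity} is used to exclude the monodromic (centre--focus) situation, in which the topological type cannot be read off the principal part, while the absence of a curve of singularities in $X_{\Delta}^{U}$ ensures that the reduction terminates in finitely many elementary singularities carrying characteristic orbits. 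This is the step I expect to be the main obstacle: proving that the reduction of the non-elementary corners halts and that the resulting arrangement of hyperbolic, parabolic and elliptic sectors depends only on $X_{\Delta}^{U}$, exactly as the separatrix configuration is controlled in \cite{BrunellaMiari}.

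Finally, I would assemble the local data. Along the regular arcs of $\mathcal{I}$ the field is transverse to the divisor up to the boundary, giving flow boxes and a canonical matching of transversal sections; splicing the local homeomorphisms built at the corners with these flow-box identifications yields a homeomorphism on a neighbourhood $W$ of $\mathcal{I}$ in $\mathcal{M}$ that fixes $\mathcal{I}$ and carries oriented orbits of $\overline{X}$ to oriented orbits of $\overline{X_{\Delta}^{U}}$. It then remains to descend through the blow-down $\pi\colon\mathcal{M}\to\mathbb{D}_{(\alpha,\beta)}$ that collapses the auxiliary divisors of $\mathcal{I}$ onto the boundary circle $\mathcal{D}$: since $\pi$ is a proper orbit-preserving map that is a homeomorphism away from the corner points, the equivalence descends to neighbourhoods $W_{1},W_{2}$ of $\mathcal{D}$, producing the homeomorphism $H\colon W_{1}\to W_{2}$ with $H(\mathcal{D})=\mathcal{D}$ required by Definition \ref{def-eq-infinity}. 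This establishes that $X$ and $X_{\Delta}^{U}$ are topologically equivalent near the infinity.
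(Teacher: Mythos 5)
Your overall architecture (pass to the toric compactification adapted to $\mathcal{P}$, observe that after multiplying by the appropriate power of the divisor coordinate the lowest-order terms are exactly those supported on the face of $\mathcal{P}^{U}$ selected by $\xi_{j}$, analyze the singularities chart by chart, then glue) is the same as the paper's. But there is a genuine error at the step where you locate the singularities. You claim that non-degeneracy of $X_{\Delta}^{U}$ "means exactly that $\overline{X_{\Delta}^{U}}$ has no singularity in the interior of any component of $\mathcal{I}$," so that all singularities sit at the corners. This is false. In the chart where $\xi_{1}$ is normal to a segment $\gamma_{l}\subset\mathcal{P}^{U}$, the normal component of $\overline{X}_{1}$ carries a factor of $v$ and vanishes identically on the divisor, so a point $(\bar{u},0)$ with $\bar{u}\neq 0$ is singular as soon as the \emph{single} tangential polynomial $\sum_{(m,n)\in\gamma_{l}}\bar{u}^{\,n+1}(b_{m,n}-\beta_{1}a_{m,n})$ vanishes. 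Non-degeneracy forbids only the \emph{simultaneous} vanishing of $P_{\gamma_{l}}(1,\bar{u})$ and $Q_{\gamma_{l}}(1,\bar{u})$; it does not prevent the tangential combination from having real roots, and generically it does. The actual content of the hypothesis, and the heart of the paper's proof, is that at any such interior singularity the transverse eigenvalue $\frac{\partial\overline{X}_{1,v}}{\partial v}(\bar{u},0)=-\sum_{(m,n)\in\gamma_{l}}\bar{u}^{\,n}a_{m,n}$ cannot also vanish — otherwise one solves the $2\times 2$ system and produces a common zero of $P_{\gamma_{l}}$ and $Q_{\gamma_{l}}$ in $(\mathbb{R}^{*})^{2}$. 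Hence every singularity on the divisor, whether at a corner or in the interior of a component, is hyperbolic or semi-hyperbolic with center manifold inside the divisor, and its local type is read off from $\mathcal{P}^{U}$.

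This mislocation then distorts the rest of the argument. Your third step contemplates non-elementary corners, further weighted blow-ups, termination of the reduction, and control of the sector decomposition — you correctly identify this as the main obstacle, but it is an obstacle of your own making: under the stated hypotheses no non-elementary singularity occurs, so no further desingularization is needed and the Brunella--Miari-type reduction argument never has to be run. Relatedly, the hypothesis that $X$ is not free of characteristic orbits is not used to exclude monodromic singularities (semi-hyperbolic points with center manifold along the divisor are never monodromic); in the paper it is used only at the very end, when the local equivalences on the finitely many sectors along the circle at infinity are glued via the pasting lemma of Dumortier and one must close the chain by matching the last sector with the first. To repair your proposal you should replace the second and third steps by the explicit eigenvalue computation above (carried out in each chart, including the corner charts where condition (iii) on the fan gives a unimodular linearization), and move the characteristic-orbit hypothesis to the final gluing step.
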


\begin{proof}
The proof of the Theorem will consider the case in which the vectors of the skeleton $\Sigma^{*}_{\mathcal{P}}$ do not have positive entries. In other words, for simplicity sake, we consider the case where the segments of the upper boundary $\mathcal{P}^{U}$ has negative slope. The calculations are completely analogous in the case which the upper boundary contains segments that has positive slope, or that are horizontal or vertical.

Let $\Sigma_{\mathcal{P}}$ be the simple fan associated to $\mathcal{P}^{U}$. Since we are considering the case where the vectors of $\Sigma^{*}_{\mathcal{P}}$ do not have positive entries, then $\xi_{1} = (-1,-\beta_{1})$ and $\xi_{s-1} = (-\alpha_{s-1},-1)$ with $\alpha_{s-1},\beta_{1} \geq 0$.

For each $j = 1,\dots, s-1$ define
$$M_{j} = \min_{(m,n)\in\mathcal{P}}\{\langle\xi_{j},(m,n)\rangle\}; \quad \ \Gamma_{j} = \{(m,n)\in\mathcal{P} \ ; \ \langle\xi_{j},(m,n)\rangle = M_{j} \}.$$

Observe that each $M_{j}$ is a negative integer, because the fan $\Sigma_{\mathcal{P}}$ is obtained from the collection of inward normal vectors $\Sigma^{*}_{\mathcal{P}}$. It will also be supposed that $M_{0} = M_{s} = 0$. Moreover,  $\Gamma_{j}$ is the set of points of $\mathcal{P}$ such that the minimum $M_{j}$ is reached. It must be clear that each $\Gamma_{j}$ is either a vertex or a segment of the upper boundary $\mathcal{P}^{U}$. The proof follows analysing the compactification in each set $U_{j}$. In what follows, a singularity is \emph{semi-hyperbolic} if the linearization (Jacobian matrix) of the vector field in such point has exactly one non zero eigenvalue.

\textbf{Case (1): Change of coordinates induced by $\xi_{0} = (0,1)$ and $\xi_{1} = (-1,-\beta_{1})$ with $\beta_{1} \geq 0$.} We obtain
$$x = \frac{1}{v}, \ y = \frac{u}{v^{\beta_{1}}}, \ \quad \ u = \frac{y}{x^{\beta_{1}}}, \ v = \frac{1}{x}.$$

From differential geometry, we have
$$\displaystyle\frac{\partial}{\partial x} = \displaystyle\frac{\partial u}{\partial  x}\displaystyle\frac{\partial}{\partial u} + \displaystyle\frac{\partial v}{\partial x}\displaystyle\frac{\partial}{\partial v}, \ \quad \ \displaystyle\frac{\partial}{\partial y} = \displaystyle\frac{\partial u}{\partial  y}\displaystyle\frac{\partial}{\partial u} + \displaystyle\frac{\partial v}{\partial y}\displaystyle\frac{\partial}{\partial v};$$
and therefore we obtain the vector field
$$
\widetilde{X}_{1}(u,v) = \sum_{(m,n)\in\mathcal{P}}\displaystyle\frac{u^{n}}{v^{m + \beta_{1}n}}\Big{(}(b_{m,n} - \beta_{1}a_{m,n})u\frac{\partial}{\partial u} - a_{m,n}v\frac{\partial}{\partial v}\Big{)}.$$

By multiplying $\widetilde{X}_{1}$ by $v^{|M_{1}|}$, the vector field obtained is
\begin{equation}\label{eq-main-theorem-compact-chart-1}
\overline{X}_{1}(u,v) = \sum_{(m,n)\in\mathcal{P}} u^{n}v^{|M_{1}| + \langle\xi_{1},(m,n)\rangle}\Big{(}(b_{m,n} - \beta_{1}a_{m,n})u\frac{\partial}{\partial u} - a_{m,n}v\frac{\partial}{\partial v}\Big{)}
\end{equation}

The line $\{v = 0\}$ plays the role of the infinity. There are two cases that must be considered.

\textbf{(1.1) The vectors $\xi_{0}$ and $\xi_{1}$ are normal to no segment of $\mathcal{P}^{U}$}. In this case. $\Gamma_{1}$ is a vertex of $\mathcal{P}^{U}$, that is, $M_{1}$ is reached in only one point of the support. If such a vertex is of the form $(\tilde{m},-1)$, then $b_{\tilde{m},-1} \neq 0$ and there is no singularities at the infinity $\{v = 0\}$ for $\overline{X}_{1}$.

On the other hand, if the vertex is of the form $(\tilde{m},0)$, then $a_{\tilde{m},0}^{2} + b_{\tilde{m},0}^{2} \neq 0$. In fact, the coefficient $a_{\tilde{m},0}$ is always non zero, otherwise the vector field $X_{\Delta}^{U}$ would have a curve of singularities arriving at the infinity, and it contradicts our hypothesis. If $(b_{\tilde{m},0} - \beta_{1}a_{\tilde{m},0}) \neq 0$, in the infinity $\{v = 0\}$ the vector field $\overline{X}_{1}$ has only one equilibrium point, which is positioned at the origin and in the worst scenario it is semi-hyperbolic because $a_{\tilde{m},0}\neq 0$, and the center manifold locally coincide with $\{v = 0\}$. Finally, if $(b_{\tilde{m},0} - \beta_{1}a_{\tilde{m},0}) = 0$ then the infinity itself is a curve of singularities, and all points are semi-hyperbolic. In brief, the topological behavior of the singularities at the infinity depend only on the terms associated to $\mathcal{P}^{U}$.

\textbf{(1.2) The vector $\xi_{1}$ is normal to a segment $\gamma_{l}\subset\mathcal{P}^{U}$}. In this case, $\Gamma_{1} = \gamma_{l}$, for some $l$. The singularities of $\overline{X}_{1}$ in the infinity $\{v = 0\}$ are given by the roots of the polynomial
\begin{equation*}
\overline{X}_{1}(u,0) = \sum_{(m,n)\in\gamma_{l}} u^{n}(b_{m,n} - \beta_{1}a_{m,n})u.
\end{equation*}

Let us study the linearization of $\overline{X}_{1}$ at a singularity $(\bar{u},0)$. Denote $\overline{X}_{1} = \overline{X}_{1,u}\frac{\partial}{\partial u} + \overline{X}_{1,v}\frac{\partial}{\partial v}$. Observe that $\frac{\partial \overline{X}_{1,v}}{\partial u}(\bar{u},0) = 0$, thus the Jacobian matrix of $\overline{X}_{1}$ at $(\bar{u},0)$ is given by
$$
J\overline{X}_{1}(\bar{u},0) = \left(
  \begin{array}{cc}
    \frac{\partial \overline{X}_{1,u}}{\partial u} & \frac{\partial \overline{X}_{1,u}}{\partial v} \\
    0 & \frac{\partial \overline{X}_{1,v}}{\partial v} \\
  \end{array}
\right).
$$

This means that the eigenvalues of $\overline{X}_{1}$ at $(\bar{u},0)$ depend only on $\frac{\partial \overline{X}_{1,u}}{\partial u}$ and $\frac{\partial \overline{X}_{1,v}}{\partial v}$. We claim that the eigenvalue $\frac{\partial \overline{X}_{1,v}}{\partial v}$ is always non zero, that is, in the worst scenario the singularity is semi-hyperbolic. Indeed, suppose that
$$
\overline{X}_{1}(\bar{u},0) =  0; \ \quad \ \frac{\partial \overline{X}_{1,v}}{\partial v}(\bar{u},0) = 0; \ \quad \ \text{with} \ \bar{u} \neq 0.$$

Rewriting the last system of equations, it follows that
\begin{align*}
&\left\{
  \begin{array}{ccl}
    \displaystyle\sum_{(m,n)\in\gamma_{l}}\bar{u}^{n+1}(b_{m,n}-\beta_{1}a_{m,n}) & = & 0; \\
    -\displaystyle\sum_{(m,n)\in\gamma_{l}}\bar{u}^{n}a_{m,n} & = & 0; 
  \end{array}
\right.\Rightarrow \\
& \Rightarrow
\left(
  \begin{array}{cc}
    -\beta_{1} & 1 \\
    -1 & 0 \\
  \end{array}
\right) 
\left(
  \begin{array}{c}
    \displaystyle\sum_{(m,n)\in\gamma_{l}}\bar{u}^{n}a_{m,n} \\
    \displaystyle\sum_{(m,n)\in\gamma_{l}}\bar{u}^{n+1}b_{m,n} \\
  \end{array}
\right) = \left(
  \begin{array}{c}
    0 \\
    0 \\
  \end{array}
\right) \Rightarrow  \left\{
  \begin{array}{rc}
    P_{\gamma_{l}}(1,\bar{u}) & = 0; \\
    Q_{\gamma_{l}}(1,\bar{u}) & = 0. 
  \end{array}
\right. 
\end{align*}

The last system of equations would imply that the vector field $X_{\gamma_{j}}$ has singularities in $(\mathbb{R}^{*})^{2}$, which is a contradiction. Thus the eigenvalue $\frac{\partial \overline{X}_{1,v}}{\partial v}$ is always non zero for $(\bar{u},0)$ with $\bar{u}\neq 0$. On the other hand, if the singularity is positioned at the origin (that is, $\bar{u} = 0$), it follows from equation \eqref{eq-main-theorem-compact-chart-1} that in the worst scenario such a singularity is semi-hyperbolic. In both cases, the center manifold locally coincides with $\{v = 0\}$ and therefore the behavior of the singularities at the infinity depend only on terms of the upper boundary $\mathcal{P}^{U}$.

\textbf{Case (2): Change of coordinates induced by the vectors $\xi_{s-1} = (-\alpha_{s-1},-1)$ and $\xi_{s} = (1,0)$ with $\alpha_{s-1} \geq 0$.} We obtain
$$x = \frac{v}{u^{\alpha_{s-1}}}, \ y = \frac{1}{u}, \ \quad \ u = \frac{1}{y}, \ v = \frac{x}{y^{\alpha_{s-1}}}.$$

After change of coordinates and then multiplying the vector field by $u^{|M_{s-1}|}$ in order to extend it to the infinity, one obtains the vector field
\begin{equation}
\overline{X}_{s}(u,v) = \sum_{(m,n)\in\mathcal{P}} u^{|M_{s-1}| + \langle\xi_{s-1},(m,n)\rangle}v^{m}\Big{(}-b_{m,n}u\frac{\partial}{\partial u} + (a_{m,n}-\alpha_{s-1}b_{m,n})v\frac{\partial}{\partial v}\Big{)}.
\end{equation}

The analysis of the singularities of the vector field $\overline{X}_{s}$ at the infinity $\{u = 0\}$ is completely analogous to Case (1).

\textbf{Case (3): Change of coordinates induced by the vectors $\xi_{j-1} = (-\alpha_{j-1},-\beta_{j-1})$ and $\xi_{j} = (-\alpha_{j},-\beta_{j})$, with $\alpha_{j-1},\beta_{j-1},\alpha_{j},\beta_{j}$ being non negative.} One obtains the following change of coordinates
$$x = \frac{1}{u^{\alpha_{j-1}}v^{\alpha_{j}}}, \ y = \frac{1}{u^{\beta_{j-1}}v^{\beta_{j}}}, \ \quad \ u = \frac{y^{\alpha_{j}}}{x^{\beta_{j}}}, \ v = \frac{x^{\beta_{j-1}}}{y^{\alpha_{j-1}}}.$$

Once again, after change of coordinates and then multiplying the vector field by $u^{|M_{s-1}|}$ in order to extend it to the infinity, one obtains the vector field
\begin{equation}
\overline{X}_{j}(u,v) = \sum_{(m,n)\in\mathcal{P}} u^{|M_{j-1}| + \langle\xi_{j-1},(m,n)\rangle}v^{|M_{j}| + \langle\xi_{j},(m,n)\rangle}\Big{(}A_{m,n}u\frac{\partial}{\partial u} + B_{m,n}v\frac{\partial}{\partial v}\Big{)}.
\end{equation}
in which $A_{m,n} = (-\beta_{j}a_{m,n} + \alpha_{j}b_{m,n})$ and $B_{m,n} = (\beta_{j-1}a_{m,n}-\alpha_{j-1}b_{m,n})$. There are three subcases that must be considered.

\textbf{(3.1) The vectors $\xi_{j-1}$ and $\xi_{j}$ are normal to no segment of $\mathcal{P}^{U}$.} In this case, the set $\Gamma_{j-1} = \Gamma_{j}$ is a vertex $p_{k} = (m_{k},n_{k})\in\mathcal{P}^{U}$. Moreover, the linearization of $\overline{X}_{j}$ is given by
$$
J\overline{X}_{j}(u,v) = \left(
  \begin{array}{cc}
    -\beta_{j} & \alpha_{j} \\
    \beta_{j-1} & -\alpha_{j-1} \\
  \end{array}
\right)\left(
  \begin{array}{c}
    a_{m_{k},n_{k}}u  \\
    b_{m_{k},n_{k}}v  \\
  \end{array}
\right). 
$$

If both $a_{m_{k},n_{k}}, b_{m_{k},n_{k}}$ are non zero, then the origin is the only singularity at the infinity, and it is hyperbolic because $\tiny\det\left(
  \begin{array}{cc}
    -\beta_{j} & \alpha_{j} \\
    \beta_{j-1} & -\alpha_{j-1} \\
  \end{array}
\right) = 1$. On the other hand, if $a_{m_{k},n_{k}} = 0$ or $b_{m_{k},n_{k}} = 0$, then the infinity contains a curve of singularities (all of them semi-hyperbolic). Therefore, the singularities depend only on terms of $\mathcal{P}^{U}$

\textbf{(3.2) The vector $\xi_{j}$ is normal to a segment $\gamma_{l}\subset\mathcal{P}^{U}$.} The analysis of the origin in completely analogous to the sub case (3.1). The other singularities are analyzed using the same strategy of the sub case (1.2). Observe that in this case the singularities at the infinity are positioned on the set $\{v = 0\}$. Once again, all singularities are either hyperbolic or semi hyperbolic, and their topological behavior depends on the terms of $\mathcal{P}^{U}$.

\textbf{(3.3) The vector $\xi_{j-1}$ is normal to a segment $\gamma_{l}\subset\mathcal{P}^{U}$.} The analysis is completely analogous to the sub case (3.2). The only difference is that all the singularities at the infinity are positioned on the set $\{u = 0\}$. All singularities are either hyperbolic or semi hyperbolic, and their topological behavior depends on the monomials associated to the points of $\mathcal{P}^{U}$.

The proof so far guarantees that the vector field $X$ and its upper principal part $X_{\Delta}^{U}$ are topologically equivalent near each singularity at the infinity. The conclusion follows by proving that $X$ and $X_{\Delta}^{U}$ are topologically equivalent near the \emph{whole} boundary of the Poincaré--Lyapunov disk. Since all the singularities are either hyperbolic or semi hyperbolic and the infinity is the finite union of smooth arcs which is homeomorphic to $\mathbb{S}^{1}$, following the ideas and arguments of \cite[Theorem B]{Dumortier77} it can be constructed a homeomorphism that gives the topological equivalence between $X$ and $X_{\Delta}^{U}$ in a neighbourhood of the infinity (see also \cite[Theorem A]{PerezSilva2} for an analogous construction in the context of constrained differential systems).

In what follows we briefly describe the construction of the homeomorphism, and we refer to \cite[Theorem B]{Dumortier77} for details. The infinity is decomposed in a finite number of sectors, and in each sector there is either a (hyperbolic or semi-hyperbolic) singularity, or the infinity itself is a curve of singularities. From the calculations in the first part of the proof, we already know that $X$ and $X_{\Delta}^{U}$ are topologically equivalent in each sector separately. Applying the \emph{pasting lemma} (see \cite[Lemma 5.7]{Dumortier77}) one can ``glue'' two homeomorphisms defined in two adjacent sectors. Applying this reasoning to all sectors, we now have the topological equivalence defined in a neighbourhood of the \emph{whole} infinity. Recall that the infinity is homeomorphic to $\mathbb{S}^{1}$, so in the end of the construction one must glue the last and the first sector. A crucial hypothesis to end this construction is that at least one singularity at the infinity has characteristic orbit. \end{proof}

\subsection{Periodic orbits near the infinity}\label{subsec-periodic-orbits}

Chicone and Sotomayor \cite{ChiconeSoto} addressed the problem of completeness of integral curves of planar polynomial vector fields, and in particular in the case which the infinity itself is a periodic orbit. Some of their results related the multiplicity of the periodic orbit at the infinity with the notion of completeness of integral curves. It was proved that, if the infinity is a periodic orbit of infinite multiplicity, then there is an open neighbourhood of the infinity filled with periodic orbits.

In this subsection it will be considered the case which $X$ is free of characteristic orbits near the infinity and the upper boundary is just a segment, that is, $\mathcal{P}^{U} = \gamma$. We emphasize that in the statement of Theorem \ref{main-theorem-particular-case} it is necessary to strengthen the non degeneracy hypothesis. In Theorem \ref{main-theorem} we supposed that each $X_{\gamma_{j}}$ does not have singularities outside the coordinate axes, whereas in Theorem \ref{main-theorem-particular-case} it will be supposed that the only singularity of $X^{U}_{\Delta}$ is the origin. In this proof, it is not necessary to carry out the construction of the toric variety associated to the Newton polytope.

\begin{definition}
A planar vector field $Y$ is \emph{free of periodic orbits near the infinity} if there is a neighbourhood $W$ of the boundary of $\mathbb{D}_{(\alpha,\beta)}$ such that the compactification $\overline{Y}$ does not have periodic orbits in $W$.
\end{definition}

\begin{mtheorem}\label{main-theorem-particular-case}
Let $X$ be a planar polynomial vector field free of characteristic orbits near the infinity and suppose that the upper boundary is just a segment, that is, $\mathcal{P}^{U} = \gamma$. Suppose also that the origin is the only singularity of $X_{\Delta}^{U}$. If $X_{\Delta}^{U}$ is free of periodic orbits near the infinity, then $X$ is also free of periodic orbits near the infinity.   
\end{mtheorem}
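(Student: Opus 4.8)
The plan is to avoid the full toric construction of Section \ref{sec-compact-adapted} and work with a single Poincaré--Lyapunov compactification, taking as weight $\omega=(\alpha,\beta)$ the primitive inward normal vector of the unique segment $\gamma=\mathcal{P}^{U}$. With this weight the upper principal part $X_{\Delta}^{U}$ is exactly the top level $X_{\gamma}$ of the $(\alpha,\beta)$-decomposition \eqref{eq-quasi-homogeneous-vf}, hence it is quasi-homogeneous, while every monomial of $X$ lying strictly below $\gamma$ contributes, after the change \eqref{eq-cs-sn} and the normalizing multiplication by a power of $r$, a term vanishing on the line at infinity to strictly higher order. Writing global PL-coordinates $(r,\theta)$, I would first record that $\overline{X_{\Delta}^{U}}$ has the decoupled skew-product form
\begin{equation*}
\dot{r}=r\,\mathcal{R}(\theta),\qquad \dot{\theta}=\Theta(\theta),
\end{equation*}
with $\mathcal{R},\Theta$ being $T$-periodic functions built from $\operatorname{Cs}\theta,\operatorname{Sn}\theta$ (this is forced by quasi-homogeneity), and that $\overline{X}$ has the same form up to higher order in $r$:
\begin{equation*}
\dot{r}=r\,\mathcal{R}(\theta)+O(r^{2}),\qquad \dot{\theta}=\Theta(\theta)+O(r).
\end{equation*}

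Next I would use the two standing hypotheses to pin down the structure of the infinity $\mathcal{D}=\{r=0\}$. The assumption that the origin is the only singularity of $X_{\Delta}^{U}$ says precisely that $\mathcal{R}$ and $\Theta$ have no common zero. A zero $\theta_{*}$ of $\Theta$ alone would produce a singularity of $\overline{X}$ on $\mathcal{D}$ whose linearization, exactly as in Case (1.2) of the proof of Theorem \ref{main-theorem}, is triangular with transverse eigenvalue $\mathcal{R}(\theta_{*})\neq 0$; such a semi-hyperbolic point has its strong manifold transverse to $\mathcal{D}$ and therefore carries a characteristic orbit into the interior of $\mathbb{D}_{(\alpha,\beta)}$, contradicting the hypothesis that $X$ is free of characteristic orbits near the infinity. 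Hence $\Theta(\theta)\neq 0$ for all $\theta$, so $\mathcal{D}$ is an invariant curve with $\dot{\theta}\neq 0$, i.e.\ a periodic orbit of both $\overline{X_{\Delta}^{U}}$ and $\overline{X}$, and $\dot{\theta}\neq 0$ persists on a full neighbourhood of $\mathcal{D}$ by continuity.

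With $\dot{\theta}\neq 0$ near $\mathcal{D}$ I would fix a transversal section $\{\theta=\theta_{0}\}$ and compare the associated Poincaré first-return maps. From $dr/d\theta=\dot{r}/\dot{\theta}=r\,\mathcal{R}(\theta)/\Theta(\theta)+O(r^{2})$, the return map of $\overline{X}$ is $r\mapsto \Lambda\,r+O(r^{2})$ with multiplier $\Lambda=\exp\!\big(\oint_{0}^{T}\mathcal{R}(\theta)/\Theta(\theta)\,d\theta\big)$, and this multiplier is \emph{identical} for $\overline{X}$ and $\overline{X_{\Delta}^{U}}$, since the $O(r)$ corrections from the lower levels feed only into the $O(r^{2})$ part of $dr/d\theta$. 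The hypothesis that $X_{\Delta}^{U}$ is free of periodic orbits near the infinity is exactly the statement that $\mathcal{D}$ is an \emph{isolated} periodic orbit of $\overline{X_{\Delta}^{U}}$, equivalently $\Lambda\neq 1$ (when $\Lambda=1$ the decoupled system has every nearby orbit periodic, filling a neighbourhood). Therefore $\Lambda\neq 1$ for $\overline{X}$ as well; for $r>0$ small the return map satisfies $r\mapsto\Lambda r+O(r^{2})\neq r$, so every orbit meeting the section returns strictly closer to (or farther from) $\mathcal{D}$, no fixed point other than $r=0$ occurs, and $\dot{\theta}\neq 0$ rules out contractible closed orbits. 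Thus $\mathcal{D}$ is a hyperbolic, hence isolated, periodic orbit of $\overline{X}$ and a punctured neighbourhood of $\mathcal{D}$ contains no periodic orbits, which is the assertion.

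The step I expect to be the main obstacle is the exact order bookkeeping guaranteeing that $\Lambda$ is computed from the top level alone: one must verify that, after the single normalizing multiplication by a power of $r$ applied identically to $X$ and to $X_{\Delta}^{U}$, each monomial strictly below $\gamma$ contributes to $\dot{r}$ at order $r^{2}$ or higher and to $\dot{\theta}$ at order $r$ or higher, so that neither $\mathcal{R}/\Theta$ along $\mathcal{D}$ nor the linearized return map is disturbed; this is precisely where the single-segment geometry $\mathcal{P}^{U}=\gamma$ is used decisively. A secondary delicate point is the implication sketched above, that ``free of characteristic orbits'' upgrades ``no common zero of $\mathcal{R}$ and $\Theta$'' to ``$\Theta$ has no zero at all'', which is what makes $\mathcal{D}$ a genuine periodic orbit and the return map well defined.
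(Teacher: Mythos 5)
Your proposal is correct and follows essentially the same route as the paper: a single Poincaré--Lyapunov chart in $(\theta,r)$ with weight normal to $\gamma$, the observation that $dr/d\theta = G(\theta)r+\mathcal{O}(r^{2})$ with the linear coefficient determined solely by the top quasi-homogeneous level, and comparison of the resulting displacement functions $r(T)-r(0)$ for $\overline{X}$ and $\overline{X^{U}_{\Delta}}$. Your explicit use of the ``free of characteristic orbits'' hypothesis to exclude zeros of $\Theta$ on $\{r=0\}$, so that the return map is globally defined, is a point the paper's proof leaves implicit but is the same underlying mechanism.
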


\begin{proof}
Suppose that the inward normal vector of $\gamma$ is $(-\alpha,-\beta)$. Using the change of coordinates \eqref{eq-cs-sn}, one obtain the following vector field
\begin{equation}\label{eq-compact-polar}
\begin{split}
\overline{X}(\theta, & r) = \displaystyle\sum_{d = -1}^{\delta - 1}r^{\delta - d - 1}\Big{(}\operatorname{Cs}\theta Q_{d}(\operatorname{Cs}\theta,\operatorname{Sn}\theta) - \displaystyle\frac{\beta}{\alpha}\operatorname{Sn}\theta P_{d}(\operatorname{Cs}\theta,\operatorname{Sn}\theta)\Big{)} \displaystyle\frac{\partial}{\partial \theta} \\
& -\displaystyle\sum_{d = -1}^{\delta - 1}\displaystyle\frac{r^{\delta - d}}{\alpha}\Big{(}\operatorname{Cs}^{2\beta - 1}
\theta P_{d}(\operatorname{Cs}\theta,\operatorname{Sn}\theta) + \operatorname{Sn}^{2\alpha - 1}
\theta Q_{d}(\operatorname{Cs}\theta,\operatorname{Sn}\theta)\Big{)} \displaystyle\frac{\partial}{\partial r}.
\end{split}
\end{equation}

In the notation above, $P_{d}$ is a quasi homogeneous polynomial of type $(\alpha,\beta)$ and degree $d + \alpha$, whereas $Q_{d}$ is a quasi homogeneous polynomial of type $(\alpha,\beta)$ and degree $d + \beta$. The highest quasi homogeneous components are $P_{\delta-1}$ and $Q_{\delta - 1}$.

For simplicity sake, write the Equation \eqref{eq-compact-polar} as
$$\overline{X}(\theta, r) = \overline{X}_{\theta}(\theta, r)\displaystyle\frac{\partial}{\partial \theta} + \overline{X}_{r}(\theta, r)\displaystyle\frac{\partial}{\partial \theta}.$$

The Jacobian matrix of \eqref{eq-compact-polar} in a singularity in the infinity $\{r = 0\}$ is given by
$$
J(\theta, 0) = \left(
  \begin{array}{cc}
    \displaystyle\frac{\partial \overline{X}_{\theta}}{\partial \theta}(\theta, 0) & \displaystyle\frac{\partial \overline{X}_{\theta}}{\partial r}(\theta, 0) \\
    0 & \displaystyle\frac{\partial \overline{X}_{r}}{\partial r}(\theta, 0) \\
  \end{array}
\right);
$$
and therefore the eigenvalues depend on $\frac{\partial \overline{X}_{\theta}}{\partial \theta}(\theta, 0)$ and $\frac{\partial \overline{X}_{r}}{\partial r}(\theta, 0)$. We affirm that if $(\bar{\theta},0)$ is a singularity, then $\frac{\partial \overline{X}_{r}}{\partial r}(\bar{\theta},0) \neq 0$, that is, in the worst scenario the singularity is semi hyperbolic. Indeed, supposing that
$\overline{X}_{\theta}(\bar{\theta}, 0) = 0$ and $ \frac{\partial \overline{X}_{r}}{\partial r}(\bar{\theta},0) = 0$ one would have
\begin{equation*}
\begin{split}
& \left\{
  \begin{array}{rcl}
    \overline{X}_{\theta}(\bar{\theta}, 0) & = & 0; \\
    \frac{\partial \overline{X}_{r}}{\partial r}(\bar{\theta},0) & = & 0;
  \end{array}
\right. \Rightarrow \\
& \Rightarrow
\left(
  \begin{array}{cc}
    -\frac{\beta}{\alpha}\operatorname{Sn}\bar{\theta} & \operatorname{Cs}\bar{\theta} \\
    \operatorname{Cs}^{2\beta - 1}\bar{\theta} & \operatorname{Sn}^{2\alpha - 1}\bar{\theta} \\
  \end{array}
\right)\left(
         \begin{array}{c}
           P_{\delta - 1}(\operatorname{Cs}\bar{\theta},\operatorname{Sn}\bar{\theta}) \\
           Q_{\delta - 1}(\operatorname{Cs}\bar{\theta},\operatorname{Sn}\bar{\theta}) \\
         \end{array}
       \right)
=
\left(
         \begin{array}{c}
           0 \\
           0 \\
         \end{array}
       \right) \Rightarrow \\
       & \Rightarrow\left\{
  \begin{array}{rcl}
    P_{\delta - 1}(\operatorname{Cs}\bar{\theta},\operatorname{Sn}\bar{\theta}) & = & 0; \\
    Q_{\delta - 1}(\operatorname{Cs}\bar{\theta},\operatorname{Sn}\bar{\theta}) & = & 0;
  \end{array}
\right.
\end{split}
\end{equation*}
which is a contradiction because by hypothesis the origin is the only singularity of $X^{U}_{\Delta}$ and $(\operatorname{Cs}\theta,\operatorname{Sn}\theta) \neq (0,0)$ for all $\theta$. Therefore, the singularities at infinity of $X$ are either hyperbolic or semi hyperbolic, and they depend only on terms of $X^{U}_{\Delta}$. 

In order to study the existence of periodic orbits in a neighbourhood of the infinity $\{r = 0\}$, we will compute the Poincaré map for $\overline{X}$ and $\overline{X}^{U}_{\Delta}$. Consider the ODEs
\begin{equation*}
\displaystyle\frac{dr}{d\theta} = \displaystyle\frac{\overline{X}_{r}(\theta,r)}{\overline{X}_{\theta}(\theta,r)} = G(\theta)r + \mathcal{O}(r^{2}), \ \quad \ \displaystyle\frac{dr}{d\theta} = \displaystyle\frac{\overline{X}^{U}_{\Delta,r}(\theta,r)}{\overline{X}^{U}_{\Delta,\theta}(\theta,r)} = G^{U}_{\Delta}(\theta)r,
\end{equation*}
whose solutions are denoted by $r(\theta)$ and $r^{U}_{\Delta}(\theta)$, respectivelly. Recall that the period $T$ of the functions $\operatorname{Cs}{\theta}$ and $\operatorname{Sn}\theta$ is given by \eqref{eq-period-lyapunov}. If $X^{U}_{\Delta}$ is free of periodic orbits near the infinity, then there is a neighbourhood $W$ of $\{r = 0\}$ such that
\begin{equation*}
r^{U}_{\Delta}(T) - r^{U}_{\Delta}(0)  = e^{\int_{0}^{T}G^{U}_{\Delta}(\tau)d\tau} - 1 \neq 0.    
\end{equation*}
and therefore $r^{U}_{\Delta}(T) - r^{U}_{\Delta}(0)$ and $r(T) - r(0)$ have the same sign in such neighbourhood, which implies that $X$ is also free of periodic orbits near the infinity. \end{proof}

\section{The effect of PLC process in the Newton polytope}\label{sec-plc-newton-polygon}

The goal of this section is to study the effect of the PLC process on the Newton polytope of a planar polynomial vector field. The effect of the \emph{Poincaré-compactification} in the \emph{Newton polyhedra} of a \emph{$n$-dimensional} polynomial vector field was already discussed in \cite[Proposition 4]{Kappos} (see also \cite{Berezovskaya2} for discussion in the planar case). Pelletier \cite{Pelletier} has investigated  the effect of quasi homogeneous blow-ups in the Newton polytope.

Recall that the Newton polygon depends on the coordinate system adopted. Given a polynomial vector field $X$, we say that $\mathcal{P}_{X}$ is \emph{favorable} if $\mathcal{P}^{U}_{X}$ has at least one segment with negative slope. See Figure \ref{fig-def-favorable}.

\begin{figure}[h]\center{
\begin{overpic}[width=0.95\textwidth]{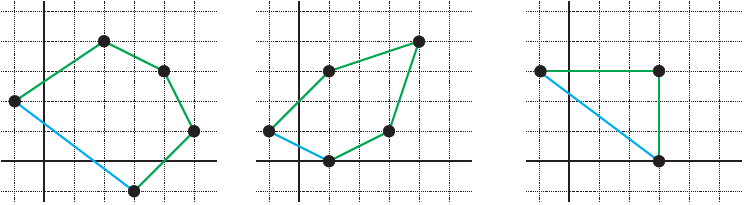}
\end{overpic}}
\caption{\footnotesize{Favorable Newton polytope (left). The other two polytopes are non favorable. The upper and lower boundaries $\mathcal{P}^{U}$ and $\mathcal{P}^{L}$ are highlighted in green and blue, respectively.}}
\label{fig-def-favorable}
\end{figure}

Let $p_{0}\in\mathcal{P}^{L}$ be the first point of the support $\mathcal{S}$ according to the Lexicographical order, and  $p_{h} = (m_{h}, n_{h})\in\mathcal{P}^{U}$ be the last point of $\mathcal{S}$ according to the Lexicographical order satisfying the condition $n_{h} \geq n$, for all $p = (m, n)\in\mathcal{P}$. The vertices $p_{0}$ and $p_{h}$ are called \emph{lower} and \emph{upper main vertices}, respectively. Analogously, the segments $\gamma_{1}$ and $\gamma_{h}$ (recall from subsection \ref{subsec-newton-polytope} the way we enumerated the segments of $\partial\mathcal{P}$) are called \emph{lower} and \emph{upper main segments}, respectively. See Figure \ref{fig-lower-upper-segments}.

\begin{figure}[h]\center{
\begin{overpic}[width=0.50\textwidth]{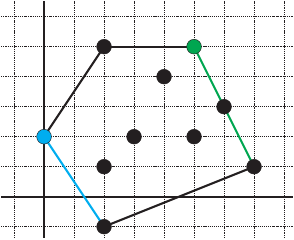}
\put(5,30){$p_{0}$}
\put(23,25){$\gamma_{1}$}
\put(70,70){$p_{h}$}
\put(65,50){$\gamma_{h}$}
\end{overpic}}
\caption{\footnotesize{The lower and upper main vertices $p_{0}$ and $p_{h}$ are highlighted in blue and green, respectively. Analogously, the lower and upper main segments $\gamma_{1}$ and $\gamma_{h}$ are highlighted in blue and green, respectively.}}
\label{fig-lower-upper-segments}
\end{figure}

Recall that the segments of $\partial\mathcal{P}$ were enumerated in the counter clockwise sense. From now on, we will work with the segments $\gamma^{U}_{j}\subset\mathcal{P}^{U}$ of the upper boundary, and these segments will also be enumerated in the counter clockwise sense: $\gamma^{U}_{1},\dots,\gamma^{U}_{l-1},\gamma_{h},\gamma^{U}_{l+1},\dots,\gamma^{U}_{k}$.

\begin{definition}
Let $\mathcal{P}$ be a favorable Newton polytope. The \emph{right upper part of $\mathcal{P}^{U}$} is the union of the segments $\gamma^{U}_{1},\dots,\gamma^{U}_{l-1}\subset\mathcal{P}^{U}$, and the \emph{left upper part of $\mathcal{P}^{U}$} is the union of the segments $\gamma^{U}_{l+1},\dots,\gamma^{U}_{k}\subset\mathcal{P}^{U}$.
\end{definition}

In the resolution of isolated singularities of planar analytic vector fields using weighted blow-ups \cite{Pelletier}, we chose as weight vector $\omega$ the inward normal vector of the lower main segment $\gamma_{1}$. In what follows is indicated how to chose a suitable weight vector for the PLC process in the case which $\mathcal{P}$ is favorable. If $\mathcal{P}$ is non favorable, one can apply the classical Poincaré compactification in order to study the dynamics near the infinity.

\subsection{Effect of the Poincaré--Lyapunov compactification on a favorable Newton polytope}

Denote the line that contains the upper main segment $\gamma_{h}$ by $r_{h}$. The suitable weight vector for the Poincaré--Lyapunov compactification is given by $r_{h}$. In other words, if the line that contains $\gamma_{h}$ is given by $r_{h} = \{\alpha m + \beta n = \delta\}$, then the weight vector is $\omega = (\alpha,\beta)$. Note that $\delta$ is the highest level of the $(\alpha,\beta)$-decomposition of $X$ (recall Equation \eqref{eq-quasi-homogeneous-vf}). See Figure \ref{fig-segments}.

\begin{figure}[h]\center{
\begin{overpic}[width=0.6\textwidth]{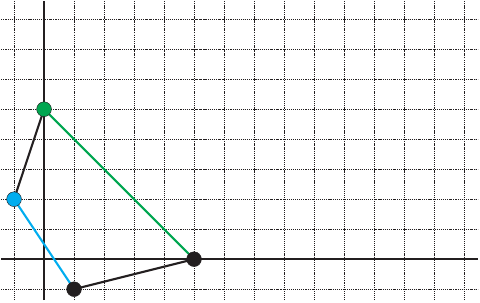}
\put(-5,21){$p_{0}$}
\put(15,40){$p_{h}$}
\put(3,5){$\gamma_{1}$}
\put(25,30){$\gamma_{h}$}
\end{overpic}}
\caption{\footnotesize{In the resolution of singularities, the weight vector is normal to the lower main segment $\gamma_{1}$ (which is higlighted in blue). On the other hand, in the Poincaré--Lyapunov compactification, the weight vector is normal to the upper main segment $\gamma_{h}$ (which is higlighted in green).}}
\label{fig-segments}
\end{figure}

Our study starts by analyzing the compactification at the positive $y$-direction given by \eqref{eq-plc-y-positive-vf}. More precisely, Proposition \ref{prop-plc-y-dir} describes the effect of the compactification in the upper boundary $\mathcal{P}^{U}$. The analysis for the negative $y$ direction is completely analogous. In what follows, a point $q$ is an \emph{elementary point} of a given vector field if $q$ is a regular point, a hyperbolic or semi-hyperbolic singularity of the vector field.

\begin{proposition}\label{prop-plc-y-dir}
Let $X$ be a polynomial vector field given by \eqref{eq-quasi-homogeneous-vf}. After a Poincaré-Lyapunov compactification in the positive $y$-direction, the following hold:
\begin{description}
    \item[(a)] The lower boundary of $\mathcal{P}_{\overline{X}^{+}_{y}}$  is precisely the image of the left upper part of $\mathcal{P}^{U}_{X}$.
    \item[(b)] If $p_{h}\in\mathcal{P}_{X}$ is contained in $\{-1\}\times\mathbb{N}$ or $\{0\}\times\mathbb{N}$, then origin is an elementary point for $\overline{X}^{+}_{y}$.
\end{description}

\end{proposition}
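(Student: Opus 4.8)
The plan is to reduce the whole proposition to a single affine change of variables on the plane of powers, induced by \eqref{eq-plc-y-positive-vf}. Reading off that formula, the monomial attached to a support point $(m,n)$, with $d=\alpha m+\beta n$, is sent in the logarithmic basis of $\overline{X}^{+}_{y}$ to the lattice point $\Phi(m,n)=(m,\ \delta-1-\alpha m-\beta n)$, carrying the coefficient pair $(A_{m,n},B_{m,n})=(a_{m,n}-\tfrac{\alpha}{\beta}b_{m,n},\ -\tfrac{1}{\beta}b_{m,n})$. The first thing I would verify is that $(A_{m,n},B_{m,n})=(0,0)$ exactly when $(a_{m,n},b_{m,n})=(0,0)$: indeed $B_{m,n}=0$ forces $b_{m,n}=0$, and then $A_{m,n}=a_{m,n}$. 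Hence no support point is created or destroyed, so $\mathcal{S}_{\overline{X}^{+}_{y}}=\Phi(\mathcal{S}_{X})$. Since $\Phi$ is an affine isomorphism (with linear part $L=\left(\begin{smallmatrix}1&0\\-\alpha&-\beta\end{smallmatrix}\right)$, $\det L=-\beta$), it commutes with taking convex hulls, and therefore $\mathcal{P}_{\overline{X}^{+}_{y}}=\Phi(\mathcal{P}_{X})$. Everything then becomes a question about how $\Phi$ moves the boundary of $\mathcal{P}_{X}$.

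For part (a) I would track inward normals. A boundary segment $\gamma$ with inward normal $(\mu,\nu)$ is carried by $\Phi$ to a segment whose inward normal is $(L^{-1})^{T}(\mu,\nu)=(\mu-\tfrac{\alpha}{\beta}\nu,\ -\tfrac{1}{\beta}\nu)$. A segment belongs to the lower boundary precisely when its inward normal lies in the open first quadrant, so $\Phi(\gamma)$ is a lower-boundary segment iff $\nu<0$ and $\beta\mu-\alpha\nu>0$; written in terms of the slope $s=-\mu/\nu$ of $\gamma$, this is exactly $s>-\alpha/\beta$. As $\gamma_{h}$ has inward normal $(-\alpha,-\beta)$ and slope $-\alpha/\beta$, the upper segments with $s>-\alpha/\beta$ are precisely those lying counterclockwise beyond $\gamma_{h}$, i.e.\ the left upper part; the lower-boundary segments of $\mathcal{P}_{X}$ (which have $\nu>0$) and the right upper part ($s<-\alpha/\beta$) are excluded by the same inequality. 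The point still to be pinned down is that $\gamma_{h}$ itself does not contribute: its image lies at the minimal value of the second coordinate on $\Phi(\mathcal{P}_{X})$ (because $\gamma_{h}$ realizes the maximal level $d$), hence it is horizontal and collinear with the bottom ray of $\Phi(\mathcal{P}_{X})+\mathbb{R}^{2}_{\geq0}$, so it is absorbed into that ray rather than counted as a compact segment. Because $\Phi$ preserves the edge–vertex incidence of the boundary, the images of the left upper segments are exactly the compact segments of $\partial\bigl(\Phi(\mathcal{P}_{X})+\mathbb{R}^{2}_{\geq0}\bigr)$, which gives (a).

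For part (b) I would compute the $1$-jet of $\overline{X}^{+}_{y}$ at the origin. From the description above, only lattice points with $m\in\{-1,0\}$ on the top two levels can feed the linear part, and one checks that $J\overline{X}^{+}_{y}(0,0)$ is upper triangular, with diagonal entries $A_{0,\cdot}$ and $B_{0,\cdot}$ coming from the point of $\gamma_{h}$ lying on $\{m=0\}$, when such a point exists. If $p_{h}=(-1,n_{h})$, the logarithmic-basis convention forces $b_{-1,n_{h}}=0$, so $B_{p_{h}}=0$ and $A_{p_{h}}=a_{-1,n_{h}}$; this $A_{p_{h}}$ is the constant term of the $u$-component and is nonzero because $p_{h}\in\mathcal{S}_{X}$, so $\overline{X}^{+}_{y}(0,0)\neq 0$ and the origin is a regular point. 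If $p_{h}=(0,n_{h})$, the diagonal (hence the eigenvalues) is $A_{p_{h}}=a_{0,n_{h}}-\tfrac{\alpha}{\beta}b_{0,n_{h}}$ and $B_{p_{h}}=-\tfrac{1}{\beta}b_{0,n_{h}}$, which vanish simultaneously only if $(a_{0,n_{h}},b_{0,n_{h}})=(0,0)$; this is impossible since $p_{h}\in\mathcal{S}_{X}$, so at least one eigenvalue is nonzero and the origin is hyperbolic or semi-hyperbolic. In all cases the origin is elementary.

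The step I expect to be the main obstacle is the boundary bookkeeping in (a): it is not enough to check that each left-upper segment maps into the open first quadrant, one must show that together these images are \emph{exactly} the compact part of the lower boundary. This forces a careful treatment of the extremal edges — chiefly $\gamma_{h}$, whose image is swallowed by the horizontal ray of the Minkowski sum, and symmetrically any extremal vertical or horizontal upper edge — so that they are not miscounted. As in the proof of Theorem \ref{main-theorem}, I would first treat the generic case in which every upper segment has strictly negative slope, where the correspondence is cleanest, and then dispose of the extremal horizontal and vertical edges by the same ray-absorption argument.
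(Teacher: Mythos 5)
Your proof is correct and follows essentially the same route as the paper's: both reduce the statement to the affine map $(m,n)\mapsto(m,\delta-1-\alpha m-\beta n)$ on the plane of powers, observe that $\gamma_{h}$ is flattened onto a horizontal segment absorbed by the horizontal ray of the Minkowski sum, and conclude that exactly the left upper segments land on the compact part of the new lower boundary, with (b) following from where $p_{h}$ lands. Your version is in fact more careful than the paper's on two points it leaves implicit — that the coefficient pairs $(A_{m,n},B_{m,n})$ vanish only when $(a_{m,n},b_{m,n})$ do (so the support is genuinely transported), and the explicit inward-normal criterion $\nu<0$, $\beta\mu-\alpha\nu>0$ that certifies the left/right dichotomy — so no gap to report.
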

\begin{proof}
 Let $r_{h} = \{\alpha m + \beta n = \delta\}$ be the line that contains the upper main segment $\gamma_{h}$. Observe that $X^{(\alpha,\beta)}_{\delta}$ is the highest level of the $(\alpha,\beta)$ decomposition \eqref{eq-quasi-homogeneous-vf}. After a compactification in the positive $y$-direction with weights $(\alpha,\beta)$, we obtain the vector field \eqref{eq-plc-y-positive-vf}.

The transformation \eqref{eq-plc-y-positive} does not modify powers of $x$, but it modifies powers of $y$. Moreover, before multiplying the vector field by $v^{\delta}$, every line of the form $\{\alpha m + \beta n = d\}$ is sent in a horizontal line of the form $\{(m,-d); m\in\mathbb{R}\}$. In particular, the line $r_{h}$ is sent to the line $\{(m,-\delta); m\in\mathbb{R}\}$, which is the first line of the new plane of powers (according to the lexicographical order). By multiplying the vector field $v^{\delta}$, the line $\{(m,-\delta); m\in\mathbb{R}\}$ is sent to the line $\{(m,0); m\in\mathbb{R}\}$, that is, $\gamma_{h}$ has been sent to a horizontal segment contained in the horizontal axis. See Figure \ref{fig-plc-y-dir}.

\begin{figure}[h]\center{
\begin{overpic}[width=0.9\textwidth]{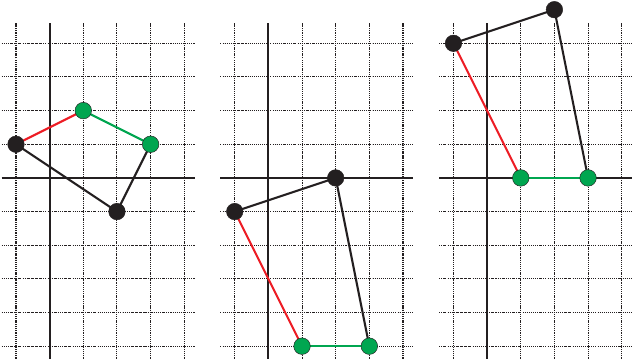}
\end{overpic}}
\caption{\footnotesize{The effect of the Poincaré--Lyapunov compactification in the Newton polytope. The leftmost polytope is the initial polytope, and the rightmost figure is the polytope after PLC in the positive $y$ direction. The upper main segment $\gamma_{h}$ is highlighted in green. See Proposition \ref{prop-plc-y-dir}.}}
\label{fig-plc-y-dir}
\end{figure}

It is important to stress that the compactification in the $y$-positive direction does not change powers of $x$, whereas it changes powers of $y$. In other words, if $(m,n)\in\mathcal{S}_{X}$ and $(m,n)$ is contained in the line $\{\alpha m + \beta n = d\}$, then after compactification in the positive $y$ direction it is mapped into the point $(m,\delta - d)$. This implies that the segments $\gamma^{U}_{l+1},\dots,\gamma^{U}_{k}$ are mapped into the segments of the lower boundary of $\mathcal{P}_{\overline{X}^{+}_{y}}$. In particular, for $j = l+1\dots,k$, if $\gamma^{U}_{j}$ is contained in a line of the form $\{\alpha' m + \beta'n = d'\}$ with $\gcd(\alpha',\beta') = 1$, then the slope of the segment $\gamma^{U}_{j}$ is $\theta' = -\frac{\alpha'}{\beta'}$ and after PLC in the $y$ direction the segment will have slope $\theta = \frac{\beta\alpha' - \beta'\alpha}{\beta'}$. This proves item (a).

For item (b), if $p_{h}\in\mathcal{P}_{X}$ is contained in $\{-1\}\times\mathbb{N}$ or $\{0\}\times\mathbb{N}$, after PLC $p_{h}$ is sent in a point of the form $(-1,0)$ or $(0,0)$, which implies that the origin is an elementary point. 
 \end{proof}




The next proposition aims to describe of the effect of the Poincaré--Lyapunov compactification in the positive $x$-direction on $\mathcal{P}^{U}$. More precisely, Proposition \ref{prop-plc-x-dir} states what happens to $\mathcal{P}^{U}$ after a transformation of the form \eqref{eq-plc-x-positive}.

\begin{proposition}\label{prop-plc-x-dir}
Let $X$ be a polynomial vector field given by \eqref{eq-quasi-homogeneous-vf}. After Poincaré--Lyapunov compactification in the positive $x$-direction, the lower boundary of $\mathcal{P}_{\overline{X}^{+}_{x}}$  is the image of the right upper part of $\mathcal{P}^{U}_{X}$.
\end{proposition}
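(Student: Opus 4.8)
The plan is to run the argument of Proposition \ref{prop-plc-y-dir} with the roles of $x$ and $y$ interchanged, which, as we will see, also interchanges the left and right upper parts of $\mathcal{P}^{U}_{X}$. Let $r_{h}=\{\alpha m+\beta n=\delta\}$ be the line carrying the upper main segment $\gamma_{h}$. The compactification in the positive $x$-direction is the transformation \eqref{eq-plc-x-positive}, $x=v^{-\alpha}$, $y=uv^{-\beta}$, under which a monomial $x^{m}y^{n}$ becomes $u^{n}v^{-(\alpha m+\beta n)}=u^{n}v^{-d}$. Thus, in contrast with the $y$-direction, this transformation \emph{preserves the powers of $y$} (which become the exponent of $u$) and \emph{absorbs the powers of $x$} into the exponent of $v$. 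Reading off \eqref{eq-plc-x-positive-vf} and multiplying by the power of $v$ needed to extend the field to the infinity, one checks that a point $(m,n)$ lying on the level line $\{\alpha m+\beta n=d\}$ is sent to the point $(n,\delta-d)$ of the new plane of powers. The first step is to record that the induced map
\[
\phi(m,n)=\bigl(n,\ \delta-(\alpha m+\beta n)\bigr)
\]
is affine with invertible linear part $L$ of determinant $\alpha>0$ (whereas in the $y$-direction the determinant is $-\beta<0$; this change of orientation is the source of the left/right interchange). Hence $\phi$ is an isomorphism carrying $\mathcal{P}_{X}$ onto $\mathcal{P}_{\overline{X}^{+}_{x}}$, sending $\partial\mathcal{P}_{X}$ to $\partial\mathcal{P}_{\overline{X}^{+}_{x}}$ and preserving convexity.

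I would then locate the image of $\gamma_{h}$. Because the second coordinate of $\phi$ is $\delta-d$, a strictly decreasing function of the level $d$, the maximal level $\delta$ (attained exactly on $\gamma_{h}$) is sent to the minimal height. Consequently $\gamma_{h}$ is flattened onto a horizontal segment lying on the non-compact bottom ray of $\partial(\mathcal{S}_{\overline{X}^{+}_{x}}+\mathbb{R}_{\geq0}^{2})$, so that its image is not a bounded edge and does \emph{not} contribute to the compact lower boundary $\mathcal{P}^{L}_{\overline{X}^{+}_{x}}$; it plays the same auxiliary role as in Proposition \ref{prop-plc-y-dir}.

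The heart of the proof is to decide which bounded edges of $\partial\mathcal{P}_{X}$ do land on $\mathcal{P}^{L}_{\overline{X}^{+}_{x}}$. An edge lies on the compact lower boundary precisely when its image inward normal has both coordinates strictly positive. If $(\mu,\nu)$ is the inward normal of an edge $\gamma\subset\partial\mathcal{P}_{X}$, its image normal is proportional to $(L^{-1})^{\top}(\mu,\nu)=\bigl(\nu-\tfrac{\beta}{\alpha}\mu,\ -\tfrac{1}{\alpha}\mu\bigr)$, which lies in the open first quadrant exactly when $\mu<0$ and $\alpha\nu-\beta\mu>0$. I would observe that this cone of normals is precisely the family swept out by the inward normals strictly between the upward vertical and the normal $(-\alpha,-\beta)$ of $\gamma_{h}$; these are exactly the inward normals of the edges of the right upper part $\gamma^{U}_{1},\dots,\gamma^{U}_{l-1}$, lying between the lowest vertex of $\mathcal{P}^{U}_{X}$ and $\gamma_{h}$. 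The normals of $\gamma_{h}$ (for which $\alpha\nu-\beta\mu=0$), of the left upper part (opposite sign), and of the lower boundary (for which $\mu>0$) all fail the test, and the corresponding edges are thrown onto the bottom ray or onto the upper portion of $\partial\mathcal{P}_{\overline{X}^{+}_{x}}$. Since $\phi$ preserves convexity and the cyclic order of the normals, the images of $\gamma^{U}_{1},\dots,\gamma^{U}_{l-1}$ assemble into a single convex chain, which is therefore exactly $\mathcal{P}^{L}_{\overline{X}^{+}_{x}}$.

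The main obstacle I anticipate is this last identification: one must check that the cone condition $\{\mu<0,\ \alpha\nu-\beta\mu>0\}$ selects the right upper part and nothing else, that $\gamma_{h}$ sits exactly on the boundary of this cone and is hence thrown onto the non-compact bottom ray rather than onto a compact edge, and that no edge is created or lost in passing to the image. Once this bookkeeping is settled, the conclusion is the exact mirror of the positive $y$-direction computation of Proposition \ref{prop-plc-y-dir}.
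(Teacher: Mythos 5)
Your proposal is correct, and its computational core coincides with the paper's: both arguments reduce the proposition to the fact that, on the plane of powers, the compactification in the positive $x$-direction acts by the affine map sending a point $(m,n)$ on the level line $\{\alpha m+\beta n=d\}$ to $(n,\delta-d)$. The paper gets there by factoring \eqref{eq-plc-x-positive} as the composition of \eqref{eq-prop-plc-x-positive-1} (which sends level lines to vertical lines, so $(m,n)\mapsto(\delta-d,n)$ after normalizing by a power of $\tilde{u}$) with the swap \eqref{eq-prop-plc-x-positive-2} (reflection across $\{m=n\}$); you work with the composite map $\phi$ directly, which is a cosmetic difference. Where you genuinely diverge is in identifying which edges land on $\mathcal{P}^{L}_{\overline{X}^{+}_{x}}$: the paper does this by inspection, computing the slopes $\theta=\frac{\alpha\beta'-\beta\alpha'}{\alpha'}$ of the image segments and asserting that the images of $\gamma^{U}_{1},\dots,\gamma^{U}_{l-1}$ form the lower boundary, whereas you argue dually, transporting inward normals by $(L^{-1})^{\top}$ and characterizing the compact lower boundary of the image by the open-first-quadrant condition $\{\mu<0,\ \alpha\nu-\beta\mu>0\}$, which is exactly the open cone swept counterclockwise from $(0,1)$ to $(-\alpha,-\beta)$ and hence selects precisely the right upper part. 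Your version buys a rigorous justification of the step the paper asserts by inspection, and it correctly records that the image of $\gamma_{h}$ falls on the non-compact horizontal ray of $\partial\mathcal{P}_{+}$ and therefore does not contribute; the residual bookkeeping you flag (horizontal or vertical edges whose normals sit on the boundary rays of the cone) is routine and is exactly the degenerate situation the paper also leaves implicit, so nothing essential is missing.
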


\begin{proof}
Firstly, let us notice that the formula \eqref{eq-plc-x-positive} of the PLC in positive $x$-direction can be seen as the composition of the following maps: \begin{multicols}{2}\noindent
\begin{equation}\label{eq-prop-plc-x-positive-1}
x = \tilde{u}^{-\alpha}, \ y = \tilde{v}\tilde{u}^{-\beta}    
\end{equation}
\begin{equation}\label{eq-prop-plc-x-positive-2}
 \tilde{u} = v, \ \tilde{v} = u.   
\end{equation} \end{multicols}

The proof of this proposition consists in study the effect of the transformations \eqref{eq-prop-plc-x-positive-1} and \eqref{eq-prop-plc-x-positive-2} in the Newton polytope. Let $r_{h} = \{\alpha m + \beta n = \delta\}$ be the line that contains the upper main segment $\gamma_{h}$. Observe that $X^{(\alpha,\beta)}_{\delta}$ is the highest level of the $(\alpha,\beta)$ decomposition \eqref{eq-quasi-homogeneous-vf}. After a transformation of the form \eqref{eq-prop-plc-x-positive-1}, lines of the form $\{\alpha m + \beta n = d\}$ are sent to vertical lines of the form $\{m = -d\}$. This implies that the line $r_{h}$ is sent to the vertical line $\{m = -\delta\}$, and therefore $\gamma_{h}$ is mapped into a vertical segment. After an suitable multiplication of a power of $\tilde{u}$, the segment is translated to the vertical axis.

The transformation \eqref{eq-prop-plc-x-positive-1} does not change powers of $y$, whereas it changes powers of $x$. In other words, if $(m,n)\in\mathcal{S}_{X}$ is contained in the line $\{\alpha m + \beta n = d\}$, then after the transformation \eqref{eq-prop-plc-x-positive-1} it is mapped into the point $(\delta - d,n)$. Thus the segments $\gamma^{U}_{1},\dots,\gamma^{U}_{l-1}$ are mapped into the segments of the lower boundary of $\mathcal{P}_{\overline{X}^{+}_{x}}$. In particular, for $j = 1\dots,l-1$, if $\gamma^{U}_{j}$ is contained in a line of the form $\{\alpha' m + \beta'n = d'\}$ with $\gcd(\alpha',\beta') = 1$, then after PLC in the $y$ direction the segment will have slope $\theta = \frac{\alpha\beta' - \beta\alpha'}{\alpha'}$.

Observe that \eqref{eq-prop-plc-x-positive-2} is a linear transformation, whose effect is to reflect the points of the Newton polygon over the line $\{m = n\}$ (see Figure \ref{fig-plc-x-dir}). This concludes the proof. \end{proof}

\begin{figure}[h]\center{
\begin{overpic}[width=0.9\textwidth]{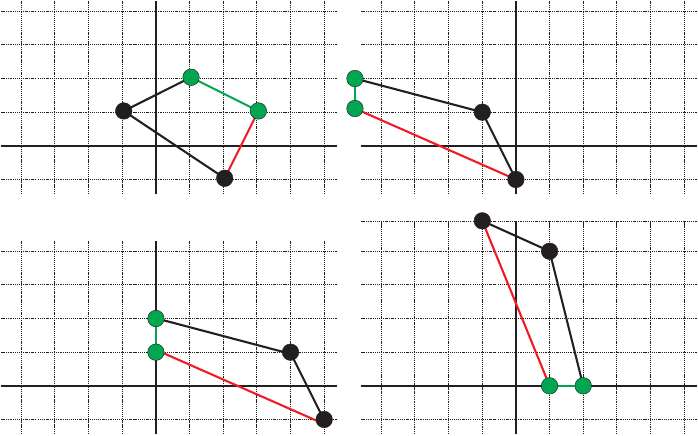}
\end{overpic}}
\caption{\footnotesize{The effect of the Poincaré--Lyapunov compactification in the positive $x$ direction. The leftmost figure in the top is the initial polytope, and the leftmost figure in the bottom figure is the polytope after the transformation \eqref{eq-prop-plc-x-positive-1}. Afterwards, applying a transformation of the form \eqref{eq-prop-plc-x-positive-2}, one obtains the rightmost polytope in the bottom. The upper main segment $\gamma_{h}$ is highlighted in green. See Proposition \ref{prop-plc-x-dir}.}}
\label{fig-plc-x-dir}
\end{figure}

\subsection{The non degeneracy condition and the PLC process}\label{subsec-influence-non-degeneracy}

Recall that the upper principal part of a planar vector field $X$ is non degenerated if the restriction $X_{\gamma_{j}}$ of $X$ does not have singularities outside the coordinate axes, in which $\gamma_{j}\subset\mathcal{P}^{U}$. The following proposition assures that this non degeneracy condition implies that the singularities at the infinity outside the origin of the $x$ and $y$ directions are either hyperbolic or semi-hyperbolic.

\begin{proposition}
Let $X$ be a planar polynomial vector field such that the upper principal part $X^{U}_{\Delta}$ is non degenerated. After a suitable PLC with weight $\omega = (\alpha, \beta)$, all the singularities in the infinity outside the origin of the (positive and negative) $x$ and $y$ directions are either hyperbolic or semi-hyperbolic.   
\end{proposition}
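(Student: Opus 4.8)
The plan is to reduce the statement to the eigenvalue computation already performed in sub-cases (1.2) and (3.2) of the proof of Theorem \ref{main-theorem}, but now carried out directly in the four directional charts of the Poincaré--Lyapunov compactification rather than in the toric charts. By the symmetry of the construction it suffices to treat the positive $x$-direction, the negative $x$-direction and the two $y$-directions being entirely analogous. I would take $\omega=(\alpha,\beta)$ to be the (primitive) inward normal vector of the upper main segment $\gamma_h$, so that $\gamma_h$ is the highest level of the $(\alpha,\beta)$-decomposition \eqref{eq-quasi-homogeneous-vf}. Then in the expression \eqref{eq-plc-x-positive-vf} every lower level carries a strictly positive power $v^{\delta-(d+1)}$, so that the monomials surviving on the infinity $\{v=0\}$ of $\overline{X}^{+}_{x}$ are exactly those associated to $\gamma_h\subset\mathcal{P}^{U}$.

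First I would write $\overline{X}^{+}_{x}=\overline{X}^{+}_{x,u}\frac{\partial}{\partial u}+\overline{X}^{+}_{x,v}\frac{\partial}{\partial v}$ using \eqref{eq-plc-x-positive-vf} and fix a singularity $(\bar u,0)$ at infinity with $\bar u\neq 0$ (the origin $\bar u=0$ being explicitly excluded from the statement). Since the $v$-component of \eqref{eq-plc-x-positive-vf} carries a global factor $v$, one has $\frac{\partial \overline{X}^{+}_{x,v}}{\partial u}(\bar u,0)=0$, so the Jacobian matrix at $(\bar u,0)$ is triangular and its two eigenvalues are the diagonal entries $\frac{\partial \overline{X}^{+}_{x,u}}{\partial u}(\bar u,0)$ and $\frac{\partial \overline{X}^{+}_{x,v}}{\partial v}(\bar u,0)$. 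It therefore suffices to prove that the second eigenvalue never vanishes, for then the singularity has at least one non zero eigenvalue and is hyperbolic or, in the worst case, semi-hyperbolic.

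The heart of the argument is a contradiction, exactly as in sub-case (1.2) of Theorem \ref{main-theorem}. Assuming simultaneously $\overline{X}^{+}_{x}(\bar u,0)=0$ and $\frac{\partial \overline{X}^{+}_{x,v}}{\partial v}(\bar u,0)=0$, restricting the sums in \eqref{eq-plc-x-positive-vf} to $(m,n)\in\gamma_h$ and dividing the $u$-equation by $\bar u\neq 0$, I obtain a homogeneous linear system in the two quantities $\sum_{(m,n)\in\gamma_h}\bar u^{n}a_{m,n}$ and $\sum_{(m,n)\in\gamma_h}\bar u^{n}b_{m,n}$, with coefficient matrix
$$
\left(
\begin{array}{cc}
-\frac{\beta}{\alpha} & 1 \\
-\frac{1}{\alpha} & 0
\end{array}
\right),
$$
whose determinant is $\frac{1}{\alpha}\neq 0$. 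Invertibility forces both sums to vanish, and this is precisely the statement that the quasi homogeneous component $X_{\gamma_h}$ vanishes at the point $(1,\bar u)\in(\mathbb{R}^{*})^{2}$; this contradicts the non degeneracy of $X^{U}_{\Delta}$. For the $y$-directions one repeats the computation with \eqref{eq-plc-y-positive-vf}, where the analogous matrix has determinant $-\frac{1}{\beta}\neq 0$, and the same contradiction with non degeneracy arises.

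I expect the obstacles here to be bookkeeping rather than conceptual. The one point that must be checked with care is that the choice of weight $\omega=(\alpha,\beta)$ really singles out $\gamma_h$ as the set of monomials surviving on $\{v=0\}$ — that is, that $v^{\delta-(d+1)}$ is a positive power of $v$ for every level strictly below $\gamma_h$ — and that the powers of $\bar u$ are tracked correctly, so that the system obtained is the genuinely homogeneous one above and not a perturbation of it. Once these are settled, the four directional charts are handled uniformly and the proposition follows.
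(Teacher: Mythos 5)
Your proposal is correct and follows essentially the same route as the paper's own proof: both work in the Poincar\'e--Lyapunov directional charts with the weight normal to the upper main segment, observe that the Jacobian at a singularity $(\bar u,0)$ with $\bar u\neq 0$ is triangular, and show the eigenvalue $-\frac{1}{\alpha}P_{\delta}(1,\bar u)$ cannot vanish without forcing a zero of $X_{\gamma_h}$ in $(\mathbb{R}^{*})^{2}$, contradicting non-degeneracy. The only cosmetic difference is that you package the contradiction as an invertible $2\times 2$ homogeneous system (as in sub-case (1.2) of Theorem \ref{main-theorem}) while the paper splits into the cases $P_{\delta}(1,\bar u)\neq 0$ and $P_{\delta}(1,\bar u)=0$.
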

\begin{proof}
Consider the planar polynomial vector field
$$X(x,y) = P(x,y)\displaystyle\frac{\partial}{\partial x} + Q(x,y)\displaystyle\frac{\partial}{\partial y}.$$

Applying the PLC technique in the positive $x$ direction, one obtains
\begin{equation}\label{eq-prop-non-deg-condition}
\overline{X}^{+}_{x}(u,v) = \displaystyle\sum_{d = -1}^{\delta - 1}v^{\delta - d - 1}\Bigg{(}\Big{(}Q_{d}(1,u) - \frac{\beta}{\alpha}uP_{d}(1,u)\Big{)}\displaystyle\frac{\partial}{\partial u} -\displaystyle\frac{1}{\alpha}vP_{d}(1,u)\displaystyle\frac{\partial}{\partial v} \Bigg{)}.    
\end{equation}

Once again $P_{d}$ is a quasi homogeneous polynomial of type $(\alpha,\beta)$ and degree $d + \alpha$, whereas $Q_{d}$ is a quasi homogeneous polynomial of type $(\alpha,\beta)$ and degree $d + \beta$. Then the singularities at infinity $\{v = 0\}$ can be analyzed by the Jacobian matrix
$$
J(u,0) = \left(
  \begin{array}{cc}
    \frac{\partial Q_{\delta}}{\partial u}(1,u) - \frac{\beta}{\alpha}\Big{(}P_{\delta}(1,u) + u\frac{\partial P_{\delta}}{\partial u}(1,u)\Big{)} & * \\
    0 & -\frac{1}{\alpha}P_{\delta}(1,u) \\
  \end{array}
\right). 
$$

Suppose that $(\bar{u},0)$ is a singular point with $\bar{u}\neq 0$. If $P_{\delta}(1,\bar{u}) \neq 0$, it follows from $J(\bar{u},0)$ that in the worst scenario the singularity is semi hyperbolic. On the other hand, if $P_{\delta}(1,\bar{u}) = 0$, then $Q_{\delta}(1,\bar{u}) \neq 0$ because the upper principal part is non degenerated. But from Equation \eqref{eq-prop-non-deg-condition} this would imply that $(\bar{u},0)$ is not a singularity, which is a contradiction. Therefore the singularities at the infinity of the positive $x$ direction outside the origin are either hyperbolic or semi hyperbolic. With completely analogous arguments one can show that the singularities at the infinity in the (positive and negative) $y$ direction outside the origin are either hyperbolic or semi hyperbolic too.
\end{proof}

\subsection{Favorable change of coordinates}

It is important to remark that we can always assume that a polytope $\mathcal{P}$ is favorable, due to the change of coordinates described in Lemma \ref{lemma-preparacao-poligono}. However, after such change of coordinates, $X^{U}_{\Delta}$ may be degenerated in the sense of Definition \ref{def-upper-degenerated}. These remarks will be discussed in what follows.

\begin{lemma}\label{lemma-preparacao-poligono}
Let $X$ be a polynomial vector field given by \eqref{eq-quasi-homogeneous-vf} and denote its support and Newton polytope by $\mathcal{S}_{X}$ and $\mathcal{P}_{X}$, respectively. After a linear change of coordinates of the form
\begin{equation}\label{eq-change-coord-1}
\widetilde{x} = x - \lambda y, \ \quad \ \widetilde{y} = y, \ \quad \ \lambda\in\mathbb{R};
\end{equation}
we obtain a new polynomial vector field $Y$ such that the upper main vertex of its Newton polytope $\mathcal{P}_{Y}$ is contained in $\{-1\}\times\mathbb{N}$ or $\{0\}\times\mathbb{N}$. Moreover, its upper main segment has negative slope.
\end{lemma}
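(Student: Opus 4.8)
The plan is to track the effect of the shear $\sigma\colon(\widetilde x,\widetilde y)\mapsto(\widetilde x+\lambda\widetilde y,\widetilde y)$ on the support $\mathcal{S}_X$ and to show that a generic $\lambda$ pushes the topmost lattice point of the polytope into the column $\{m=-1\}$ (or $\{m=0\}$ in one degenerate case). Writing $X=P\,\partial_x+Q\,\partial_y$, the logarithmic-basis constraints force every support point $(m,n)$ to satisfy $m\ge-1$, while $\deg X=\delta$ forces $m+n\le\delta-1$; together these show that $(-1,\delta)$ is the \emph{only} lattice point of the plane of powers that can sit at height $n=\delta$. Hence it suffices to arrange that $(-1,\delta)$ actually belongs to $\mathcal{S}_Y$.

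First I would compute the transformed field. Since the change is linear and invertible, its leading part is the pushforward of the nonzero leading part of $X$, so $\deg Y=\delta$; moreover one gets $\widetilde Q=Q\circ\sigma$ and $\widetilde P=(P-\lambda Q)\circ\sigma$. Passing to the degree-$\delta$ parts $P_\delta,Q_\delta$ and setting $\widetilde x=0$, the coefficient of $\widetilde y^{\delta}$ in $\widetilde P$ equals $P_\delta(\lambda,1)-\lambda Q_\delta(\lambda,1)=\mathcal{R}(\lambda,1)$, where $\mathcal{R}(x,y)=y\,P_\delta(x,y)-x\,Q_\delta(x,y)$ is the binary form of degree $\delta+1$ whose zeros are the characteristic directions of the leading part of $X$. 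The monomial $\widetilde y^{\delta}$ in $\widetilde P$ corresponds precisely to the support point $(-1,\delta)$; likewise the coefficient of $\widetilde y^{\delta}$ in $\widetilde Q$ is $Q_\delta(\lambda,1)$, corresponding to the support point $(0,\delta-1)$.

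Now I would choose $\lambda$; geometrically this amounts to selecting the new vertical direction $[\lambda:1]$ to be non-characteristic for the leading part. If $\mathcal{R}\not\equiv0$, then $\mathcal{R}(\cdot,1)$ is a nonzero univariate polynomial, so every $\lambda$ outside its finite zero set gives $\mathcal{R}(\lambda,1)\ne0$; for such $\lambda$ the point $(-1,\delta)$ lies in $\mathcal{S}_Y$ and, being the unique point at maximal height, it is an extreme point of $\mathcal{P}_Y$, hence the upper main vertex, with $m_h=-1$. If instead $\mathcal{R}\equiv0$ — i.e.\ the leading part is radial, $X_\delta=g\,(x\,\partial_x+y\,\partial_y)$ — the $\widetilde y^{\delta}$ coefficient in $\widetilde P$ vanishes identically, but $Q_\delta(\lambda,1)=g(\lambda,1)\not\equiv0$; for generic $\lambda$ the point $(0,\delta-1)$ lies in $\mathcal{S}_Y$, no point with $m\ge1$ can reach height $\delta-1$, and the lexicographic tie-break selects $p_h=(0,\delta-1)$, so $m_h=0$. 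In either case the upper main vertex lies in $\{-1\}\times\mathbb{N}$ or $\{0\}\times\mathbb{N}$.

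Finally, the negative-slope claim follows from the extremal position of $p_h$: it is the highest point of $\mathcal{P}_Y$ and attains the least possible abscissa in its own row, so the edge $\gamma_h$ of $\mathcal{P}^{U}_Y$ reaching $p_h$ in the counter-clockwise orientation (the one through $p_{h-1}$ and $p_h$) comes from a vertex lying strictly below and strictly to the right of $p_h$; hence $\gamma_h$ has negative slope. I expect the main obstacle to be the bookkeeping that isolates the coefficient $\mathcal{R}(\lambda,1)$ correctly together with the separate treatment of the radial case $\mathcal{R}\equiv0$; once these are in place, the convexity argument for the slope and the genericity of $\lambda$ are routine.
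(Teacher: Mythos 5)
Your location argument for the upper main vertex is correct and is essentially the paper's argument in different clothing: both proofs track how the shear redistributes the top antidiagonal $\{m+n=\delta-1\}$ of the support and then choose $\lambda$ generically. The paper expands $(x+\lambda y)^{m}$ monomial by monomial in the logarithmic basis and decides between its two cases by inspecting the single coefficient $\Lambda_{m_l,n_l}=\lambda(a_{m_l,n_l}-b_{m_l,n_l})$ attached to one extremal point; you instead work with the components $P,Q$, identify the \emph{total} coefficient of $\widetilde y^{\delta}$ in $\widetilde P$ as $\mathcal{R}(\lambda,1)$ with $\mathcal{R}=yP_{\delta}-xQ_{\delta}$, and split the cases according to whether $\mathcal{R}\equiv 0$ (radial leading part). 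Your version is in fact the more careful one: when the top antidiagonal contains several support points, the coefficient landing at $(-1,\delta)$ is a sum over all of them and can vanish by cancellation even if an individual $\Lambda_{m,n}$ is nonzero; the criterion $\mathcal{R}(\lambda,1)\neq 0$ accounts for this, while the paper's pointwise criterion does not. The identification of $\mathcal{R}$ with the characteristic-direction form is a genuine conceptual bonus, and your treatment of the radial case is cleaner than the paper's.

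The weak point is your final paragraph. A small slip first: in the radial case $p_h=(0,\delta-1)$ need not attain the least abscissa in its row, since $(-1,\delta-1)$ may also belong to $\mathcal{S}_Y$. More seriously, the claim that $p_{h-1}$ lies strictly below \emph{and strictly to the right} of $p_h$ requires the support of $Y$ to contain a point with abscissa greater than $m_h$. The ``strictly below'' half is fine (a counter-clockwise predecessor of the rightmost topmost vertex cannot sit at the same height), but the ``strictly to the right'' half fails when the whole polytope is contained in the strip $\{-1\le m\le 0\}$. For instance, $X=(1+xy)\,\partial_x+y^{2}\,\partial_y$ is fixed by every shear \eqref{eq-change-coord-1}; its support is $\{(-1,0),(0,1)\}$, the upper main vertex is $(0,1)\in\{0\}\times\mathbb{N}$ as your argument predicts, yet the upper main segment joins $(-1,0)$ to $(0,1)$ and has slope $+1$. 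To be fair, the paper's own proof dismisses this step with ``it is straightforward to see,'' so the gap is inherited from the lemma itself; but a complete argument must either assume that $\mathcal{S}_X$ contains a point with $m\ge 1$ (equivalently, that $X$ is not affine in $x$) or otherwise rule out this degenerate strip configuration before invoking convexity to get the negative slope.
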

\begin{proof}

Equations \eqref{eq-change-coord-1} give
$$\displaystyle\frac{\partial}{\partial x} = \displaystyle\frac{\partial}{\partial \widetilde{x}}, \ \quad \ \displaystyle\frac{\partial}{\partial y} = -\lambda\displaystyle\frac{\partial}{\partial \widetilde{x}} + \displaystyle\frac{\partial}{\partial \widetilde{y}},$$
which implies that in this new coordinate system we obtain the polynomial vector field (dropping the tildes in order to simplify the notation)
\begin{equation*}
\begin{split}
& Y(x,y) = \\
& = \sum_{(m,n)\in\mathcal{P}}\Bigg{(}(x+\lambda y)^{m}y^{n}\Bigg{)}\Bigg{(}a_{m,n}(x+\lambda y)\displaystyle\frac{\partial}{\partial x} -\lambda b_{m,n}y\displaystyle\frac{\partial}{\partial x} + b_{m,n}y\displaystyle\frac{\partial}{\partial y}\Bigg{)} \\
 & = \sum_{(m,n)\in\mathcal{P}}\Bigg{(}\displaystyle\sum_{i = 0}^{m}\Upsilon_{i}x^{i}y^{m+n-i}\Bigg{)}\Bigg{(}\Big{(}a_{m,n}x+ \Lambda_{m,n}y\Big{)}\displaystyle\frac{\partial}{\partial x} + b_{m,n}y\displaystyle\frac{\partial}{\partial y}\Bigg{)}.
\end{split}
\end{equation*}
in which $\Upsilon_{i} = \binom{m}{i}\lambda^{m-i}$ and $\Lambda_{m,n} = \lambda(a_{m,n}-b_{m,n})$. Geometrically, this change of coordinates add new points to the support in the following way: if $(m,n)\in\mathcal{S}_{X}$, then (depending on the coefficients $a_{m,n}$ and $b_{m,n}$) the new support $\mathcal{S}_{Y}$ of $Y$ will contain the points: $(m,n)$, $(m-1,n+1)$, $\dots$, $(0,m+n)$ and $(-1,m+n+1)$. See Figure \ref{fig-coord-change}.

\begin{figure}[h]\center{
\begin{overpic}[width=1\textwidth]{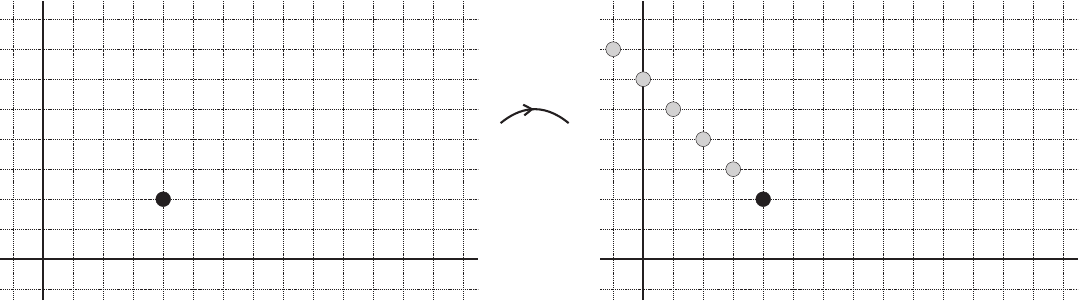}
\end{overpic}}
\caption{\footnotesize{Effect of the coordinate change \eqref{eq-change-coord-1} on the points of the Support $\mathcal{S}$.}}
\label{fig-coord-change}
\end{figure}

Denote by $p_{l} = (m_{l},n_{l})\in\mathcal{P}_{X}$ a point of $\mathcal{P}_{X}$ satisfying the property $m_{l} + n_{l} \geq m + n$, for all $p = (m,n)\in\mathcal{S}_{X}$. Such a point indeed exist because $\mathcal{S}_{X}$ is finite. Then there is two cases to consider. If $\Lambda_{m_{l},n_{l}}\neq 0$, then the new support $\mathcal{S}_{Y}$ contains a point of the form $ q_{l} = (-1,m_{l} + n_{l} +1)$. On the other hand, if $\Lambda_{m_{l},n_{l}} = 0$, then $a_{m_{l},n_{l}} \neq 0$ or $b_{m_{l},n_{l}} \neq 0$. In both cases, the new support $\mathcal{S}_{Y}$ will contain a point of the form $q_{l} = (0,m_{l} + n_{l})$, and $q_{l}$ will be the upper main vertex, since the ordinate of all points of the support $\mathcal{S}_{Y}$ will be strictly lesser than $m_{l} + n_{l}$. Therefore, in this system of coordinates the upper main vertex of $\mathcal{P}_{Y}$ is contained in $\{-1\}\times\mathbb{N}$ or $\{0\}\times\mathbb{N}$. After this construction, it is straightforward to see that in this coordinate system the upper main segment of $\mathcal{P}_{Y}$ has negative slope.
\end{proof}

Although one can always assume that $\mathcal{P}$ is favorable, after the change of coordinates of Lemma \ref{lemma-preparacao-poligono}, $X^{U}_{\Delta}$ may be degenerated in the sense of Definition \ref{def-upper-degenerated}, as it is shown in the next example.  

\begin{example}
Consider the vector field
$$X(x,y) = y^{2}(1+x^{3})\displaystyle\frac{\partial}{\partial x} + x^{2}(1+y^{3})\displaystyle\frac{\partial}{\partial y};$$
and it can be checked that $X^{U}_{\Delta}$ is non degenerated. After a change of coordinates given by \eqref{eq-change-coord-1}, one obtains the vector field
\begin{equation*}
\begin{split}
Y(x,y) & = \Big{(}u^{3}v^{2} - \lambda u^{2} + 2\lambda  u^{2}v^{3} + \lambda^2 uv^{4} - 2\lambda^{2}uv - \lambda^{3}v^{2} + v^{2}\Big{)}\displaystyle\frac{\partial}{\partial x} \\
& + \Big{(} u^{2}v^{3} + u^{2} + 2\lambda uv^{4} + 2\lambda  uv + \lambda^{2}v^{5} + \lambda^{2}v^{2} \Big{)}\displaystyle\frac{\partial}{\partial y};
\end{split}
\end{equation*}
and the Newton polytope $\mathcal{P}_{Y}$ of $Y$ is favorable. However, observe that
$$
Y_{\gamma_{h}}(x,y) = y^{2}(x + \lambda y)^{2}\Big{(}x\displaystyle\frac{\partial}{\partial x} + y\displaystyle\frac{\partial}{\partial y}\Big{)};
$$
in which $Y_{\gamma_{h}}$ denotes the vector field restricted to the monomials associated to the upper main segment $\gamma_{h}$. Observe that $Y_{\gamma_{h}}$ has singularities outside the coordinate axes, and therefore $Y^{U}_{\Delta}$ is degenerated.
\end{example}

\section{Acknowledgements}

Thais Dalbelo and Regilene Oliveira are  supported by Sao Paulo Research Foundation (FAPESP) grant ``Projeto Tem\'atico" 2019/21181-0. Thais M. Dalbelo is supported by Sao Paulo Research Foundation (FAPESP) grant 2023/01018-2. Regilene Oliveira is supported by CNPq grant ``Bolsa de Produtividade em Pesquisa" 304766/2019-4. Otavio Perez is supported by Sao Paulo Research Foundation (FAPESP) grant 2021/10198-9.

\end{document}